\newcommand{\les}{\lesssim}

\documentstyle[amssymb,amsfonts]{amsart}

\newenvironment{proof}{\noindent {\bf Proof} }{\endprf\par}
\def \endprf{\hfill  {\vrule height6pt width6pt depth0pt}\medskip}
\def\emph#1{{\it #1}}
\def\textbf#1{{\bf #1}}
\newcommand{\bea}{\begin{eqnarray}}
\newcommand{\eea}{\end{eqnarray}}
\def\beaa{\begin{eqnarray*}}
\def\eeaa{\end{eqnarray*}}

\def\ba{\begin{array}}
\def\ea{\end{array}}
\def\be#1{\begin{equation} \label{#1}}
\def \eeq{\end{equation}}

\newcommand{\nn}{\nonumber}

\def\NNN{{\Bbb N}}

\def\a{{\alpha}}
\def\b{{\beta}}

\def\De{\Delta}

\def\la{\lambda}

\def\nab{\nabla}

\def\trch{\mbox{tr}\chi}

\def\NN{{\cal N}}

\def\HH{{\cal H}}

\def\RR{{\cal R}}

\def\HH{{\cal H}}

\def\k{{\bold k}}

\def\RRR{{\Bbb R}}
\def\ZZZ{{\Bbb Z}}

\def\lap{\Delta}
\def\pr{\partial}

\def\c{\cdot}

\def\f14{{\frac{1}{4}}}
\def\f12{{\frac{1}{2}}}

\def\2{{\overline 2}}

\parindent = 0 pt
\parskip = 12 pt

\begin{document}
\theoremstyle{plain}
  \newtheorem{theorem}[subsection]{Theorem}
  \newtheorem{conjecture}[subsection]{Conjecture}
  \newtheorem{proposition}[subsection]{Proposition}
  \newtheorem{lemma}[subsection]{Lemma}
  \newtheorem{corollary}[subsection]{Corollary}

\theoremstyle{remark}
  \newtheorem{remark}[subsection]{Remark}
  \newtheorem{remarks}[subsection]{Remarks}

\theoremstyle{definition}
  \newtheorem{definition}[subsection]{Definition}

\include{psfig}

\include{psfig}
\title[Transport equations ]{Sharp $L^1$ estimates for singular
transport equations }
\author{Sergiu Klainerman}
\address{Department of Mathematics, Princeton University,
 Princeton NJ 08544}
\email{ seri@@math.princeton.edu}

\author{Igor Rodnianski}
\address{Department of Mathematics, Princeton University, 
Princeton NJ 08544}
\email{ irod@@math.princeton.edu}
\subjclass{35J10\newline\newline
The first author is partially supported by NSF grant 
DMS-0070696. The second author is partially 
supported by NSF grant DMS-0406627
}
\begin{abstract}
We provide $L^1$ estimates for 
a class of transport equations containing singular
integral operators. The form of the equation was motivated by the study of 
Kirchhoff-Sobolev parametrices   in a Lorentzian space-time
verifying  the Einstein equations. While
our main application is for a specific problem in General Relativity we believe that the 
phenomenon which our result illustrates
is of a more  general interest.

\end{abstract}
\maketitle
\vspace{-0.3in}
\section{Introduction}
The goal of this paper is to prove an $L^1$ type estimate
for solutions of the following transport equation,
\bea
\label{eq:Tr1}
\pr_t u(t,x)- a(t,x) M u(t,x)=g(t,x), \qquad u(0,x)=0.
\eea
Here $a=a(t,x)$ and $g=g(t,x)$ are  assumed to be smooth, compactly
  supported functions defined\footnote{Similar results can be
easily extended to higher dimensions.} on
$[0,1]\times\RRR^2$ and
$M$ is  a classical, translation invariant,  Calderon-Zygmund
operator in $\RRR^2$, given by a smooth\footnote{The smoothness assumption is only   imposed to 
eliminate logarithmic divergences at infinity in $\RRR^2$, is
 irrelevant to our  main concerns.} multiplier. 
Though, for simplicity,  we  shall proceed as
if the equation 
\eqref{eq:Tr1} is scalar, all our results extend easily to
systems, i.e. $u$ and $g$ take values in $\RRR^N$ and $aM$
is a $N\times N$ matrix valued operator.

Ideally, the desired estimate would take the form 
$$
\sup_{t\in [0,1]}\|u(t)\|_{L^1(\RRR^2)} \le C(\|a\|_{L^\infty([0,1]\times \RRR^2)})\,
\|g\|_{L^1([0,1]\times\RRR^2)}
$$
As it is well known however such 
 $L^1$-type estimates cannot possibly hold 
 due to 
  the failure of  $L^1$ boundedness of  Calderon-Zygmund operators.
To illustrate  this consider first the case of a constant coefficient transport equation with $a\equiv 1$. 
In this case we may write
\bea
u(t,x)=\int_0^te^{(t-s)M} g(s) ds
\eea
where,
\beaa
e^{tM}=I+tM+\frac{1}{2} (tM)^2+\cdots +\frac{1}{n!}(tM)^n+\cdots
\eeaa
The problem of $L^1$ estimates for \eqref{eq:Tr1} is then reduced to the corresponding 
question for the operators $M^n$. Each of $M^n$ is a Calderon-Zygmund operator and 
as such  does not map $L^1$ to $L^1$.   A well known    way
to resolve   this problem    is to consider instead  mapping   properties
of the Hardy space\footnote{The classical Hardy space $\HH_1$,
 defined by the norm  $
\|f\|_{{\cal H}_1}=\|f\|_{L^1(\RRR^2)} + \sup_{j=1,2} \|R_j f\|_{L^1(\RRR^2)}$, is a 
can be viewed as a logarithmic improvement of $L^1$. 
Here   $R_j=(-\lap)^{1/2} \pr_j$  are  the standard Riesz operators 
in $\RR^2$.}  $\HH_1$ to $L^1$.  Since
 translation invariant  Calderon-Zygmund operators  $M$ map ${\cal H}_1$ into ${\cal H}_1$ 
(see \cite{Stein}) we easily infer 
that a solution $u$ of the transport equation 
$$
\pr_t u-Mu=g,\qquad u(0,x)=0
$$
belongs to the space $L^\infty([0,1];{\cal H}_1)$. Indeed,
\beaa
\|u(t)\|_{{\cal H}_1}& \le& \sum_{n=0}^\infty \int_0^t \frac{(t-s)^n}{n!} \|M^n g(s)\|_{{\cal H}_1}
\le \sum_{n=0}^\infty \int_0^t \frac{C^n (t-s)^n}{n!} \|g(s)\|_{{\cal H}_1}\, ds \\ &\le& e^{Ct}
\int_0^t \|g(s)\|_{{\cal H}_1} \, ds
\eeaa
While this may be considered a satisfactory solution of the problem for the transport equation
\eqref{eq:Tr1} with constant coefficients, the situation changes drastically in the variable
coefficient case. Consider the transport equation
\be{eq:Tran2}
\pr_t u-a(x) Mu=g,\qquad u(0,x)=0
\end{equation}
with a time-independent coefficient $a(x)$. As before we may write
\be{eq:expo}
u(t,x)=\int_0^te^{(t-s)aM} g(s) ds
\end{equation}
where,
\beaa
e^{t\,aM}=I+t\,aM+\frac{1}{2} (t\, aM)^2+\cdots +\frac{1}{n!}(t\, aM)^n+\cdots
\eeaa
The multiplication operator $a$ and Calderon-Zygmund operator $M$ do not 
commute\footnote{If they did
we could  write  $(a M)^n$ as $a^n M^n$ and
derive
$
 \|u(t)\|_{L^1(\RRR^2)}\le C \int_0^t \frac {\|a\|^n_{L^\infty(\RRR^2)} (t-s)^n}{n!}
\|M^n g(s) \|_{L^1(\RRR^2)}\,ds \le e^{Ct \|a\|_{L^\infty(\RRR^2)}} 
\int_0^t \|g(s)\|_{{\cal H}_1}\, ds.
$
}. 
We  need instead that  the operator
$a M$  has the same mapping properties as
$M$, i.e. it maps $\HH_1$ to itself,  in which  case 
we would easily conclude that 
  solutions of the
transport equation \eqref{eq:Tran2}  belong to the space 
$L^\infty([0,1]; {\cal H}^1)$.  To insure this condition we are led
 to the requirement that multiplication
by the function $a=a(x)$ maps Hardy space into itself. It is well known however that a multiplication
by a bounded function does not preserve ${\cal H}_1$. Instead, such a function $a$ should 
satisfy the Dini condition 
$$
\int_0^\infty \sup_{|x-y|\le \lambda} |a(x)-a(y)|\, \frac{d\lambda}{\lambda} <\infty,
$$
see \cite{Stegenga}.
Functions satisfying the Dini condition can not be sharply characterized in terms of the 
standard Lebesgue type spaces. Specifically, one can easily see that  even if $a$ is  a single
atom in the Besov space $B^0_{\infty,1}(\RRR^2)$ or even
 in $B^1_{2,1}(\RRR^2)$, both sharp Besov refinements
of the $L^\infty(\RRR^2)$ space, does not guarantee that the Dini condition  is satisfied.  Yet, in view of the specific applications
we have in mind, we need to  consider  precisely the situation
 when $a$ belongs to the space  $B^1_{2,1}$,  and allow  
even more general functions in the time-dependent case. 
As a consequence to accomplish our goal we need to give up on the
Hardy space $\HH_1$ and consider in fact  estimates\footnote{To prove such estimates we need the 
 the  symbol $m(\xi)$ of $M$ is smooth at the origin, i.e.,
$
|\pr^\a m(\xi)|\le c (1+|\xi)^{-|\alpha|},\qquad \forall \xi\in \RRR^2. 
$}  for solutions
$u$ of transport equation   \eqref{eq:Tran2}  of the form,
\be{eq:Est1}
\sup_{t\in [0,1]} \|u(t)\|_{L^1(\RRR^2)} \le C(\|a\|_{B^1_{2,1}(\RRR^2)}) N(g),
\end{equation}
where the expression $N(g)$ reflects a logarithmic loss\footnote{Recall that according to the result 
of Stein \cite{Stein1} the Hardy space ${\cal H}_1$ contains precisely such logarithmic loss, as
the finiteness of the local, i.e. the norm $\|f\|_{L^1}+\|R_j f\|_{L^1}$ computed over balls $B$, 
${\cal H}_1$ norm of $g$ is equivalent to  
bounds on $\int_B |f(x)|\log^+f(x)\, dx$.
 } relative to the $L^1$ norm of
$g$.  The proper definition of $N(g)$ is given below in \eqref{eq:AA}.
  In the  particular case of $g$ with compact support $N(g)$ 
  becomes simply
$
 \|g\|_{L^1(\RRR^2)} \log^+\|g\|_{L^\infty(\RRR^2)} +1.
$

The key feature of  estimate  \eqref{eq:Est1} is  that
only one  logarithmic loss is present.   This means
that  we are not able to attack the problem by merely considering
the mapping properties of the operator $aM$. Indeed
the best we can prove  is the estimate,
\beaa
\sup_{t\in [0,1]} \|a Mg(t)\|_{L^1(\RRR^2)} \le C(\|a\|_{B^1_{2,1}(\RRR^2)}) N(g),
\eeaa
which  leads, by iteration, to a loss of $\big(  \log^+\|g\|_{L^\infty(\RRR^2)})^n$ for $(aM)^n$.
Instead we analyze  directly the mapping properties of the 
multilinear expressions
\be{eq:multi}
(a(x) M)^n= a(x)\, M\, a(x)\, M\,....\, a(x) M
\end{equation}
and their sums.  Using 
 commutator estimates
and  appropriate interpolations between the 
weak $L^1$ and $L^2$ mapping properties of 
the operators $M$ we are able to show that
in fact we lose only one logarithm for $\|(aM)^ng\|_{L^1}$, 
 regardless of the exponent $n$.
 Note however that under our assumptions on $a(x)$ the commutator
$[a(x),M]$ is not a bounded operator\footnote{The classical result of Coifman-Rochberg-Weiss
\cite{CRW}
requires only that $a\in {\text{BMO}}$ for the commutator to be bounded on $L^p$ with $p\in (1,\infty)$. 
Extensions of this result from $L^p$ to the Hardy space ${\cal H}_1$ however impose once again a
Dini type condition on $a$.} on $L^1(\RRR^2)$ and thus the problem can not be simply
reduced to the weak-$L^1$ estimate for the Calderon-Zygmund operator $M^n$. Instead using 
the assumption that $a\in B^1_{2,1}$ we first reduce the problem to the case where in the 
multilinear expression \eqref{eq:multi} the function $a$ is replaced by its atoms
$$
M a_{k_1} M...a_{k_{n-1}} M,
$$
with  $a_k= P_k a$ and  the Littewood-Paley projection $P_k$ associated with the dyadic 
band of frequencies of size $2^k$. We then decompose 
$$
M=M_{\ge k_1} + M_{<k_1}=P_{<k_1} M + P_{\ge k_1} M
$$
and observe that $[M_{\ge k_1}, a_{k_1}]$ is a bounded operator on $L^1$. It follows that
\begin{align*}
M a_{k_1} M...a_{k_{n-1}} M &= a_{k_1} M_{\ge k_1} M...a_{k_{n-1}} M + 
[M_{\ge k_1}, a_{k_1}] M...a_{k_{n-1}} M \\ &+ M_{<k_1} a_{k_1} M...a_{k_{n-1}} M.
\end{align*}
We now proceed inductively. The first two terms can be reduced to the problem of $L^1$ estimates
for the multilinear expressions $M^2 a_{k_2}...a_{k_{n-1}} M$ and $M...a_{k_{n-1}} M$,
each containing only $(n-1)$ Calderon-Zygmund operators and $(n-2)$ atoms $a_{k_i}$.
The remaining term $M_{<k_1} a_{k_1} M...a_{k_{n-1}} M$ can be written in the form 
$$
M_{<k_1} a_{k_1} Ma_{k_2}...a_{k_{n-1}} M=
\sum_{\ell_2,...,\ell_{n-1}}M_{<k_1} a_{k_1} M_{k_1} a_{k_2} M...a_{k_{n-1}} M_{\ell_{n-1}}.
$$
The operator $M_{<k_1}$ is handled with the help of the weak-$L^1$ estimate, which
comes on one hand with a logarithmic loss but on the other hand has a certain important 
redeeming property in the choice of the constants, which in particular made dependent on 
the multi-index $\ell_1,..,\ell_n$. The remaining argument consists in
showing that the operator $M_{k_1} a_{k_2} M_{\ell_2}...a_{k_{n-1}} M_{\ell_{n-1}}$ is 
bounded on $L^1$ with the bound reflecting exponential gains in the differences of either 
of the adjacent frequencies $|\ell_m-\ell_{m-1}|$ or $|k_m-k_{m-1}|$.

The problem of $L^1$ estimates for the transport equation \eqref{eq:Tr1} with 
variable time-dependent coefficient $a(t,x)$ exemplifies  even more
the need for such multilinear 
estimates. In this case a solution  $u$ does not quite have an exponential map 
representation similar to \eqref{eq:expo}. Instead it can be written in the form
$$
u(t)=\int_0^t T\Big\{ e^{\int_s^t a(\tau) M\, d\tau}\Big\} g(s)\, ds.
$$
Here $T$ is the Quantum Field Theory (QFT)  notation for the time ordered product. Thus, we have
\begin{align}
u(t)&=\int_0^t \sum_{n=0}^\infty\frac 1{n!} 
 T\Big \{\int_s^t\int_s^t...\int_s^t a(t_1) M a(t_2)M...
a(t_n) M\, dt_1\,...\,dt_n\Big\} g(s)\, ds \nn\\ &= \int_0^t \sum_{n=0}^\infty
\int_0^t a(t_1) M dt_1 \int_0^{t_1} a(t_2) M\, dt_2....\int_0^{t_{n-1}} a(t_n) M
\int_0^{t_n} g(s)\, ds\label{eq:expo2}
\end{align}
The time ordering $T$ arranges variables $t_1,...,t_n$ in the decreasing order 
$t_1\ge t_2\ge ...\ge t_n$. Our method for deriving $L^1$ estimates for solutions 
of the transport equation \eqref{eq:Tr1} involves analyzing each of the multilinear 
expressions in the above expansion. As in the case of the time-independent coefficient
$a$ we will be able to derive an $L^1$ estimate with a logarithmic loss under the
assumption that $a$ is a $B^1_{2,1}$ valued function with an appropriate (in fact $L^1$)
time dependence.  The infinite series representation \eqref{eq:expo2} will also help us to uncover another
phenomenon. In the case when the time-dependent coefficient $a$ can be written as
a time derivative of a function $b$, i.e., $a=\pr_t b$, the $L^1$ estimate for solutions
of the transport equation \eqref{eq:Tr1} does not require Besov regularity of the 
coefficient $a$ and instead needs $L^2([0,1]; H^1)$ regularity of $a$ together with 
$L^2([0,1]; H^2)$ regularity of $b$. Our main result is the $L^1$ estimate for solutions of 
the transport equation \eqref{eq:Tr1} with the coefficient  
$a=\pr_t b +c$ with $c\in L^1([0,1];B^1_{2,1})$ and $b$ satisfying the above conditions. 

To treat this general case we consider multilinear expressions appearing in \eqref{eq:expo2}
and decompose each of the $a(t_i)$ into its Littlewood-Paley components to form a term 
$$
J_{n.\k}(t)=\int_0^t\int_0^{t_1}...\int_0^{t_n} a_{k_1}(t_1) M a_{k_2}(t_2) M...a_{k_n}(t_n) M
g(s)\, dt_1...dt_n\, ds
$$
with $\k=(k_1,...,k_n)$. For each $\k$ will be able to show the desired estimate
$$
\sup_{t\in [0,1]} \|J_{n,\k}(t)\|_{L^1(\RRR^2)} \le C N(g).
$$
The constant $C$ above  depends on the ${L^1([0,1]; H^1)}$ norms of $a_{k_i}$ and grows with $n$. 
As a consequence we face two major summation problems: first with respect to a  given multi-index 
$\k$ followed by summation in $n$. Difficulties with summation over $\k$ are connected with 
the fact that $a$ no longer has Besov regularity $B^1_{2,1}$. This lack of regularity is due to 
the term $\pr_t b$ in the decomposition of $a$. We notice however that upon substitution
into $J_n(t)$ the term $\pr_t b_{k_j}$ can be integrated by parts which results in a gain of 1/2 
derivative\footnote{The fact that the gain is only 1/2 derivative rather than the whole derivative
is due to the $L^2$ in time integrability assumption on $b$.} or, alternatively, a factor of 
$2^{-k_j/2}$. The problem however is that this gain needs to be spread across all remaining 
$(n-1)$ terms  in $J_n(t)$, which leads us to choose $k_j$ to be the highest frequency among
all $k_i$. If the highest frequency is occupied by a Besov term $c_{k_j}$, appearing the 
decomposition of $a$ we select the second highest frequency and continue the process, which 
in the end ensures summability with respect to $\k$. This analysis may potentially lead to violent
growth of the constant $C$ with respect to $n$ and extreme care is needed. We ensure that 
$C$ decays exponentially in $n$ by imposing smallness conditions on the space-time norms
of the coefficients $b$ and $c$.

We now state our result precisely.
Consider the transport equation
$$
\pr_t u- a(t,x) M u = g(t,x),\qquad u(0,x)=0.
$$
We assume that for the coefficient $a$ 
\bea
\|a\|_1:=\|a\|_{L_t^2H^1}=(\int_0^1\|a(t)\|_{H^1(\RRR^2)}^2)^{1/2}
\le \De_0.\label{eq:Tr2}
\eea
In  addition $a$ can be decomposed as follows,
\bea
a=\pr_t b+ c\label{eq:Tr3}
\eea
where,
\bea
\|b\|_2&:=& (\int_0^1\| b(t)\|_{H^2(\RRR^2)}^2+\int_0^1\|
\pr_t b(t)\|_{H^1(\RRR^d)}^2 )^{1/2}\le \De_0\label{eq1-norm}\\
\|c\|_3&:=&\int_0^1\|c(t)\|_{B^1_{2,1}(\RRR^2)}dt\le\label{eq2-norm}
\De_0
\eea
with $B^{1}_{2,1}(\RRR^2)$  the classical inhomogeneous Besov space 
defined by the  norm,
\beaa
\|v\|_{B^{1}_{2,1}(\RRR^2)}=\|P_{\le 0} v\|_{L^2}+
\sum_{k\in\ZZZ_+}2^k\|P_k v\|_{L^2(\RRR^2)}.
\eeaa
The operator $M$ is the classical translation invariant Calderon-Zygmund operator
on $\RRR^2$, given by the symbol $m(\xi)$ verifying 
\be{eq:reg}
|\pr^\a m(\xi)|\le c (1+|\xi)^{-|\alpha|},\qquad \forall \xi\in \RRR^2. 
\end{equation}
We prove the following theorem,
\begin{theorem}[Main Theorem] Under the above assumptions ,
if $\De_0$ is sufficiently small, we have the estimate,
\bea
\sup_{t\in[0,1]}\|u(t)\|_{L^1(\RRR^2)}\les  C N(g)\label{eq:Thm1}
\eea
where,
\begin{equation}\label{eq:AA}
N(g)=\|g\|_{L^1([0, 1]\times \RRR^2)}\,\,\log^{+} \big{\{}
 \|<x>^3 g\|_{L^\infty([0,1]\times\RRR^2)}\big{\}} + 1.
\end{equation}
\label{thm:main}
\end{theorem}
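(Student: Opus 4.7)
My plan is to expand $u$ via the infinite series \eqref{eq:expo2}, decompose each factor $a(t_i)$ via Littlewood--Paley into dyadic pieces $a_{k_i}(t_i)=P_{k_i}a(t_i)$, and prove the $L^1$ bound on each multilinear expression $J_{n,\k}(t)$ with constants summable first in $\k=(k_1,\dots,k_n)$ and then in $n$. Using the decomposition $a=\pr_t b+c$ each slot further splits into $\pr_t b_{k_i}$ or $c_{k_i}$, so $J_{n,\k}$ becomes a sum of $2^n$ pieces labelled by the type at each slot. The single logarithmic factor in $N(g)$ will enter once, from one application of weak-$L^1\to L^1$; the smallness $\De_0$ will only be used at the very end to sum in $n$.

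For a fixed $\k$ I follow the reduction outlined after \eqref{eq:multi}: split $M=M_{<k_1}+M_{\ge k_1}$ at the leftmost position, commute $M_{\ge k_1}$ past $a_{k_1}$ to produce three pieces, two of which have fewer independent factors and are handled inductively, and the third of which carries the outer low-pass $M_{<k_1}$. The commutator estimate $\|[M_{\ge k_1},a_{k_1}]\|_{L^1\to L^1}\les \|a_{k_1}\|_{H^1}$ uses integrability of the kernel of $M_{\ge k_1}$ off the scale $2^{-k_1}$ together with a quantitative H\"older bound on $a_{k_1}$. For the leftover, I Littlewood--Paley-decompose each remaining $M$ into pieces $M_{\ell_j}$; the outer $M_{<k_1}$ is handled via the weak-$L^1$ estimate, losing exactly one $\log^{+}\|\langle x\rangle^3 g\|_\infty$ (the source of $N(g)$), while each sandwich $M_{k_j}a_{k_{j+1}}M_{\ell_{j+1}}$ is $L^1$-bounded with exponential decay in $|k_j-k_{j+1}|$ and $|\ell_j-k_j|$, rendering the sums in the $\ell_j$'s harmless.

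Summation over $\k$ is the delicate step. For $c$-type slots the hypothesis $c\in L^1 B^1_{2,1}$ supplies precisely the summability of $\sum_k 2^k\|c_k(t)\|_{L^2}$. For $b$-type slots only the weaker $L^2_t H^2$ control is available; to compensate I locate the slot $j^*$ carrying the largest frequency $k_{j^*}$, preferring a $b$-slot when one sits above every $c$-slot, and integrate by parts in $t_{j^*}$ to move $\pr_{t_{j^*}}$ off $b_{k_{j^*}}$ and onto neighbouring $a$-factors or the integration endpoints. Since $b\in L^2_t H^2$ and $\pr_t b\in L^2_t H^1$ imply $b\in C^0_t H^{3/2}$, the endpoint term $b_{k_{j^*}}$ obeys a half-derivative gain $2^{-k_{j^*}/2}$ relative to what $B^1_{2,1}$ would give, and because $k_{j^*}$ is the largest index this gain can be redistributed across the remaining $n-1$ slots via the exponential off-diagonal decay of the previous step, recovering the missing summability. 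If the true maximum lies on a $c$-slot, one absorbs that factor first using Besov and recurses at the next-highest index.

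The summation in $n$ is then closed by the smallness $\|a\|_1,\|b\|_2,\|c\|_3\le \De_0$: once $\k$ has been summed, each level contributes at most $C^n\De_0^n/n!$ times $N(g)$, which converges for $\De_0$ small. The hardest point, and the heart of the argument, is to execute the commutator/weak-$L^1$ reduction and the integration-by-parts manoeuvre \emph{simultaneously} while preserving only the single logarithm in $g$ and preventing the constants from growing faster than $n!$. Tracking the precise dependence of constants on $n$, on the individual $k_i$, on the pivot choice $j^*$, and on the positions of the successive high/low-pass cuts is where essentially all of the technical labour resides.
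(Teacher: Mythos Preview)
Your overall architecture matches the paper's exactly: time-ordered series expansion, Littlewood--Paley decomposition of each $a(t_i)$, the commutator/weak-$L^1$ induction yielding $\|Ma_{k_1}M\cdots a_{k_n}Mf\|_{L^1}\les B^n\prod_i\|a_{k_i}\|_{H^1}\,N(f)$, and the pivot-at-highest-frequency manoeuvre using $a=\pr_t b+c$. Two mechanisms, however, are not as you describe them and would not work as written.

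First, the paper does \emph{not} pre-split all $n$ slots into $b/c$ types to produce $2^n$ pieces. It keeps every non-pivot slot as the undecomposed $a_{k_i}$ and splits $a_{k_j}=\pr_t b_{k_j}+c_{k_j}$ only at the current maximal index $k_j$; the $b$-branch is closed by one integration by parts, and the $c$-branch recurses to the next-highest index. Second --- and this is the substantive gap --- the redistribution of the gain $2^{-k_{j}/2}$ across the remaining slots is \emph{not} driven by any off-diagonal decay in $|k_i-k_{i+1}|$: the multilinear lemma gives only $\prod_i\|a_{k_i}\|_{H^1}$ with no such decay (the exponential decay you invoke lives in the auxiliary indices $\ell_m$ from decomposing $M$, and it is fully consumed in summing those). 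The actual mechanism is Cauchy--Schwarz over the constrained lattice $\{\k':k_i\le k_j\}$, which costs a factor $k_j^{(n-1)/2}$; this is absorbed by $2^{-k_j/2}$, producing $((n-1)!)^{1/2}$ after the $k_j$-sum, and that factorial is then cancelled against the $((n-1)!)^{-1/2}$ coming from the volume $|\De_{n-2}(t)|^{1/2}$ of the time-simplex. The net bound is $\|J_n\|_{L^1}\les n^{3/2}C^n\De_0^n\,N(g)$, not $C^n\De_0^n/n!$ --- so the smallness of $\De_0$ is genuinely required to sum in $n$, and the simplex combinatorics is an essential ingredient you have not invoked.
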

\begin{remark}
For a function $g$ of compact support the expression $N(g)$ can be 
controlled as follows
\begin{equation}\label{eq:Ng}
N(g)\les \|g\|_{L^1([0, 1]\times \RRR^2)}\,\,\log^{+}
\|g\|_{L^\infty([0,1]\times\RRR^2)}+1
\end{equation}
\end{remark}
\begin{remark}
Condition \eqref{eq:reg} implies that the symbol of the operator $M$ is smooth 
at the origin, which in principle eliminates a large class of Calderon-Zygmund operators
from our consideration. We argue however that this condition is not particularly restrictive
and can be replaced with assumptions of additional spatial decay on the coefficients
$a(t,x)$. Moreover, in our application (see the paragraph below)  we consider the corresponding transport equation 
on a compact manifold (2-sphere) instead of $\RRR^2$, where the existence of a spectral 
gap ensures that condition \eqref{eq:reg} holds. In that context a prototype for 
$M$ is the operator $(-\Delta)^{-1} \nab^2$. Moreover, in that case $N(g)$ can be 
replaced by the $L\log L$ type expression \eqref{eq:Ng}. 
\end{remark}
The above theorem is a  vastly simplified model case for the type of
result we need in \cite{KR6} to prove  a conditional 
regularity result for the Einstein vacuum equations.
The  main assumption in \cite{KR6}, concerning 
the pointwise  boundedness of  the  deformation  tensor
of the unit, future, normal vectorfield  to a  space-like foliation,
allows us  to   bound  the flux of the space -time  curvature through
the boundary
$\NN^{-}(p)$ of the causal past of any point $p$ of the space-time
under consideration.  In \cite{KR1}--\cite{KR4}, see also \cite{Q}, 
we were able to show  that the boundedness of the flux of curvature
through  $\NN^{-}(p)$ suffices to control the radius of injectivity
of $\NN^{-}(p)$. This result, together with the  construction
 of a  first order parametrix in 
\cite{KR5},  is  used  in \cite{KR6} to derive 
pointwise bounds for the curvature tensor of the 
corresponding spacetime. To control the  main error term
generated by the parametrix one needs however to bound 
the $L^1$ norm of the  first two tangential  derivatives of 
$\trch$ along $\NN^{-}(p)$,  with $\trch $ the
trace of the null second fundamental 
form of $\NN^{-}(p)$.
One can show that the second tangential derivatives
of $\trch$ verifies a transport equation along the null geodesic
generators of $\NN^{-}(p)$ which can be modeled, very  roughly,
 by   \eqref{eq:Tr1}, with $g$ a term  whose
$L^1$ norm along $\NN^{-}(p)$ is bounded by the flux of curvature .
In fact a more realistic model would be  to consider a transport,
similar to
\eqref{eq:Tr1},  along the null geodesics of a 
past  null cone $\NN^{-}(p)$ in Minkowski
 space $\RRR^{3+1}$ with $t$  denoting  the value
of the standard afine parameter along null geodesics
and $x=(x^1, x^2)$ denoting the standard sperical coordinates
on the 2-spheres  $S_t$, corresponding to constant value
of $t$ along $\NN^{-}(p)$. Thus the singular 
integral operator $M$  would  act on $S_t$.

Finally we believe that our result, or rather
our proof of the result,  can be applied to
other situations where one  needs to  make
$L^1$ or $L^\infty$ estimates for singular
 transport equations,  where  a simple logarithmic 
loss is unavoidable.

\section{Preliminary results} 
 We recall briefly  the classical    Littlewood-Paley   decomposition
of functions  defined on ${\Bbb R}^d$,  \beaa
f=f_0+ \sum_{k\in
\Bbb Z_+} f_k
\eeaa 
 with frequency localized    components $f_k$, i.e. 
$\widehat{f_k}(\xi)=0$  for all values of $\xi$ outside the annulus
 $2^{k-1}\le |\xi|\le 2^{k+1}$ and a function $f_0$ with frequency localized in the ball $|\xi|\le 1$. 
 Such a decomposition can be  easily  achieved 
by choosing a  test function $\chi=\chi(|\xi|)$ in  Fourier 
 space,  supported in $\f12\le |\xi|\le 2$, and such that, for all $\xi\neq 0$, 
$\sum_{k\in \Bbb Z}\chi(2^{-k}\xi)=1$. Then for $k>0$ set $\widehat{f_k}(\xi)=
\chi(2^k\xi) \hat{f}(\xi)$ or, in physical space,
$$P_k f= f_k=p_k*f$$
where $p_k(x)= 2^{nk}p(2^{k} x)$ and $p(x)$ the 
inverse Fourier transform of $\chi$, while 
$$
\hat f_0 (\xi) = \left (1-\sum_{k\in \ZZZ_+} \chi(2^{-k}\xi)\right ) \hat f(\xi)
$$
and $f_0=P_0 f$.
 The operators $P_k$ 
are called cut-off operators or, somewhat improperly, Littlewood-Paley projections.

Let $M$ be a Calderon-Zygmund operator  with multiplier $m$, i.e.,
\bea
\widehat{ Mf}(\xi)=m(\xi) \hat{f}(\xi)\label{eq:CZ1}
\eea
Here $m$ is a smooth function satisfying
\bea
|\pr_\xi^{\a} m(\xi)|\le c (1+|\xi|)^{-|\a|},\qquad
\forall \xi\in \RRR^d\label{eq:const-c}
\eea
for all multiindices $\a$ with $|\a|\le d+6$ and a fixed constant
$c>0$. According to Michlin-H\"ormander theorem we have,
\bea
| m(x)|\le c |x|^{-d},\qquad | \pr_x m(x)|\le c |x|^{-d-1}
\eea
Due to the smoothness of the symbol of $M$ at the origin we can also 
add the estimate
\be{eq:smooth}
|m(x)|\le c (1+|x|)^{-d-6}
\end{equation}
 We shall make use of
the standard Calderon-Zygmund estimates in $L^p$, $1<p<\infty$,
\beaa
\|Mf\|_{L^p}\le C_p \|f\|_{L^p}
\eeaa
as well as the weak-$L^1$ estimate
$$
|\{x:\,|Mf(x)|>\la\}\le      C\la^{-1} \|f\|_{L^1}
$$ 
Our first result is a global version of the standard local $L^1$ estimate for 
a multiplier $M$.  The local estimate in a ball 
$B_R$  does not require the condition
\eqref{eq:smooth} and takes  the form
$$
\|Mf\|_{L^1(B_R)}\le C_R (\|f\|_{L^1}\log^+\|f\|_{L^\infty}+1).
$$
We have the following
\begin{lemma}
Let $M$ be a multiplier satisfying \eqref{eq:smooth}. Fix an $L^1(\RRR^d)$ positive function $\b$
and a constant $\mu>0$. 
Then for any smooth function $f$ of compact support
$$
\|Mf\|_{L^1}\le C N_{\mu,\b}(f),
$$
where 
$$
N_{\mu,\b}(f)= \mu \|\b\|_{L^1} + \|f\|_{L^1} \log^+\{\sup_{{\bf a}\in\ZZZ^d} \frac{\sum_{|{\bf b}-{\bf a}|\le 3    }\|\chi_{\bf b} f\|_{L^\infty}}{\mu
\|\chi_{\bf a} \b\|_{L^1}}\},
$$
$\chi_{\bf a}$ is a partition of unity adapted to the balls of radius one with 
centers at integer lattice points ${\bf a}$ and 
$\log^+x=\log(2+|x|) $.
\end{lemma}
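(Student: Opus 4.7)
The plan is to reduce the global bound to a family of local $L\log L$ estimates in unit cubes, handling near and far interactions separately via the partition of unity $\{\chi_{\bf a}\}$. Writing $f=\sum_{\bf b}f_{\bf b}$ with $f_{\bf b}=\chi_{\bf b}f$, I would split, for each ${\bf a}\in\ZZZ^d$,
\[
\chi_{\bf a}Mf=\chi_{\bf a}Mf_{\bf a}^{near}+\chi_{\bf a}Mf_{\bf a}^{far},\qquad f_{\bf a}^{near}=\sum_{|{\bf b}-{\bf a}|\le 3}f_{\bf b},\quad f_{\bf a}^{far}=\sum_{|{\bf b}-{\bf a}|>3}f_{\bf b},
\]
and estimate $\|Mf\|_{L^1}\le\sum_{\bf a}\|\chi_{\bf a}Mf\|_{L^1}$.

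For the far piece, if $x\in\mathrm{supp}\,\chi_{\bf a}$ and $y\in\mathrm{supp}\,\chi_{\bf b}$ with $|{\bf a}-{\bf b}|>3$, then $|x-y|\gtrsim 1+|{\bf a}-{\bf b}|$, so the decay assumption \eqref{eq:smooth} yields $|m(x-y)|\le C(1+|{\bf a}-{\bf b}|)^{-d-6}$. Consequently $\|\chi_{\bf a}Mf_{\bf a}^{far}\|_{L^1}\le C\sum_{|{\bf b}-{\bf a}|>3}(1+|{\bf a}-{\bf b}|)^{-d-6}\|f_{\bf b}\|_{L^1}$, and summation in ${\bf a}$ produces the harmless contribution $C\|f\|_{L^1}$, which is absorbed into $N_{\mu,\b}(f)$ since $\log^+(\cdot)\ge\log 2>0$.

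The substantive piece is the near term, for which I would first establish, for any compactly supported $g$ and any threshold $t_0>0$, the local estimate
\[
\|\chi_{\bf a}Mg\|_{L^1}\le Ct_0+C\|g\|_{L^1}\bigl(1+\log^+(\|g\|_{L^\infty}/t_0)\bigr).
\]
This is proved by integrating the distribution function $\la(t)=|\{|Mg|>t\}|$ against $\chi_{\bf a}$: the weak-$L^1$ bound $\la(t)\le Ct^{-1}\|g\|_{L^1}$ is used on the range $t_0<t<\|g\|_{L^\infty}$, producing the logarithm, while the $L^2$ bound $\la(t)\le Ct^{-2}\|g\|_{L^1}\|g\|_{L^\infty}$ on $t>\|g\|_{L^\infty}$ converges to the additive $C\|g\|_{L^1}$. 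Applying this to $g=f_{\bf a}^{near}$ with $t_0=\mu\|\chi_{\bf a}\b\|_{L^1}/\|\chi_{\bf a}\|_{L^1}$, and noting that $\|f_{\bf a}^{near}\|_{L^\infty}\le\sum_{|{\bf b}-{\bf a}|\le 3}\|\chi_{\bf b}f\|_{L^\infty}$, bounds the log factor by the supremum defining $N_{\mu,\b}(f)$.

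Summing over ${\bf a}$ then gives $\sum_{\bf a}Ct_0\|\chi_{\bf a}\|_{L^1}=C\mu\|\b\|_{L^1}$ for the base contribution, while $\sum_{\bf a}\|f_{\bf a}^{near}\|_{L^1}\le C\|f\|_{L^1}$ (each $f_{\bf b}$ is counted in $O(1)$ cubes) multiplied by the uniform log factor yields $C\|f\|_{L^1}\log^+\{\cdots\}$; adding the far contribution completes the bound. The main obstacle is calibrating the threshold $t_0$: it must be large enough that the cumulative base $\sum_{\bf a}t_0\|\chi_{\bf a}\|_{L^1}$ is still controlled by $\mu\|\b\|_{L^1}$, yet small enough that a single logarithm of $\|g\|_{L^\infty}/t_0$ suffices. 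The localized prescription $t_0\sim\mu\|\chi_{\bf a}\b\|_{L^1}$ reconciles these competing demands and produces precisely the form of $N_{\mu,\b}(f)$.
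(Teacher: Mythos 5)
Your argument is correct and genuinely different from the paper's in two respects. Where the paper splits the kernel, $M=M_0+M_1$, isolating the tail of $m$ (integrable thanks to \eqref{eq:smooth}) so that $M_1$ is trivially $L^1$-bounded and $M_0$ has a kernel of compact support — which then automatically yields the bounded overlap of the sets $\mathrm{supp}\,M_0(\chi_{\bf a}f_{\ge\la})$ — you instead leave $M$ intact and split $f=f_{\bf a}^{near}+f_{\bf a}^{far}$ for each cube, using \eqref{eq:smooth} pointwise on the kernel to sum the far interactions. And where the paper performs a Calder\'on–Zygmund cut of $f$ by value, $f=f_{<\la}+f_{\ge\la}$, and applies a strong $L^2$ estimate to the small part and weak-$L^1$ to the spatially localized large part, you instead keep $g=f_{\bf a}^{near}$ whole and partition the $t$-axis at the two thresholds $t_0({\bf a})$ and $\|g\|_{L^\infty}$, applying the trivial volume bound, the weak-$L^1$ bound, and the $L^2\to L^2$ bound on the three ranges. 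The essential device — calibrating a cube-dependent threshold $t_0({\bf a})\sim\mu\|\chi_{\bf a}\b\|_{L^1}$ so that the base contributions telescope to $\mu\|\b\|_{L^1}$ while the logarithm is taken only once and controlled uniformly by the supremum over lattice points — is the same, and this is the real content of the lemma. Your route is somewhat more explicit in exhibiting the $L\log L$ mechanism as a three-range Marcinkiewicz argument applied directly to $M$, at the cost of carrying the near/far bookkeeping per cube; the paper's kernel truncation makes the bounded-overlap step slightly cleaner because the compact support of $m_0$ is purely geometric. Both give the stated estimate with constants depending only on $d$ and the constant $c$ in \eqref{eq:smooth}.
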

\begin{proof}
We first note that the problem can be reduced to the case when the kernel of $M$,
given by the function $m(x)$, has compact support. This follows since
$$
M f(x) = M_0 f (x) + M_1 f(x),\qquad M_1 f(x) =\int \chi(x-y) m(x-y) f(y) \, dy,
$$
where $\chi$ is a smooth cut-off function vanishing on the ball of radius one. 
Assumption \eqref{eq:smooth} guarantees that $\chi(x) m(x)$ is integrable.
 As a consequence, 
$$
 \|M_1 f\|_{L^1}\le C\|f\|_{L^1}. 
$$
To deal with $M_0$ we proceed in the usual fashion by writing 
\begin{align*}
\|M_0 f\|_{L^1} &= \int_0^\infty |\{x: |M_0 f(x)|>\lambda\}|\, d\lambda\le  
 \int_0^\infty |\{x: |M_0 f_{<\la} (x)|>\lambda\}|\, d\lambda\\& +  
 \int_0^\infty |\{x: |M_0 f_{\ge \lambda}(x)|>\lambda\}|\, d\lambda,
\end{align*}
where $f_{<\la}(x)$ is the function coinciding with $f(x)$ on the set where 
$|f(x)|<\la$ and vanishing on its complement, and $f_{\ge \la}=f(x)-f_{<\la}$. 
To estimate the term with $f_{<\la}$ we use the weak-$L^2$ estimate
\begin{align*}
 \int_0^\infty |\{x: |M_0 f_{<\la} (x)|>\lambda\}|\, d\lambda\le 
 C \int_0^\infty \frac {\|f_{<\la}\|_{L^2}^2}{\la^2} &= C \int\int_{|f(x)|}^\infty \la^{-2} |f(x)|^2\, d\la\, dx\\ &=
  C \int |f(x)|\,  dx
\end{align*}
\def\aa{{\bf a}}
To estimate the term with $f_{\ge \la}$ we decompose $f_{\ge \la}$ into 
the sum of functions $f_{\ge \la}^\aa=\chi_\aa f_{\ge \la}$
$$
f_{\ge \la}=\sum_{\aa\in{\Bbb Z}^d} \chi_\aa f_{\ge\la},
$$
where $\chi_{\aa}$ is a partition of unity, parametrized by integer lattice points in $\RRR^d$ 
with the property that  the support of $\chi_\aa$ is contained in the ball of radius two 
around the point $\aa\in \RRR^d$. Since the kernel of $M_0$ is supported in a ball of radius
one,  the support of $M_0 f^\aa_{\ge \la}$ is contained in the ball of radius three around $k$. 
As a consequence, there are at most $3^d C$ functions  $M_0 f^\aa_{\ge \la}$ containing
any given point $x$ in their support. Therefore,
$$
 |\{x: |M_0 f_{\ge \lambda}(x)|>\lambda\}|\le \sum_{\aa\in{\Bbb Z}^d}
  |\{x: |M_0 f^\aa_{\ge \lambda}(x)|> \lambda (3^dC)^{-1}\}|.
$$
We also  have the  trivial estimate, with another constant still denoted  $C$, 
$$
  |\{x: |M_0 f^\aa_{\ge \lambda}(x)|> \lambda (3^dC)^{-1}\}|\le 3^d C.
$$
Thus, using a weak-$L^1$ estimate we obtain
\beaa
J_{\bf a}:&=& \int_0^\infty |\{x: |M_0 f^\aa_{\ge \lambda}(x)|>\lambda (3^dC)^{-1}\}|\, d\lambda\\
&\le &
 \int_0^{\la_0} 3^d C +   3^d C \int \int_{\la_0}^\infty \la^{-1} \|\chi_\a f_{\ge\la}\|_{L^1}\, d\la\\& 
 \le &3^d C \la_0 + 3^d C \int_{\la_0}^\infty \int_{|f(x)|\ge \la } \la^{-1} |\chi_\aa f(x)|\,dx\, d\la\\ &\le &
 3^d C\la_0 +  3^d C \int  \chi_\aa(x) |f(x)|  \,\big| \log \frac {|f(x)|}{\la_0} \big|\, dx\\
 &\les & 3^d C\la_0 +  3^d C \int_{|f(x)|\ge \la_0}  \chi_\aa(x) |f(x)|  \, \log \frac {|f(x)|}{\la_0} \, dx\\
 &\les & 3^d C\la_0 +  3^d C \int  \chi_\aa(x) |f(x)|  \, \log^+ \frac {|f(x)|}{\la_0} \, dx
\eeaa
for some $\la_0>0$. We now choose $\la_0=\mu \int \chi_\aa(x) \b(x)\, dx$. The above estimate
then becomes
\beaa
J_{\bf a}&\le &
 3^d C\left (\mu \|\chi_{\bf a}\b\|_{L^1} + \int \chi_\aa(x) |f(x)| \,
 \log^+
 \frac{|f(x)|}{\mu \|\chi_{\bf a} \b\|_{L^1}}   \right).\\
 &\les&3^d C\left (\mu \|\chi_{\bf a}\b\|_{L^1} + \int |f(x)|
  \chi_\aa(x) \,  |\log^+ \sum_{\bf b}\frac{\chi_{\bf b}(x)|f(x)|}{\mu\|\chi_a\b\|_{L^1}}  \right)\\
   &\les&3^d C\left (\mu \|\chi_{\bf a}\b\|_{L^1} + \int |f(x)|\chi_{\bf a}(x)
 \,  \log^+ \sum_{|{\bf b}-{\bf a}|\le 3}\frac{\chi_{\bf b}(x)|f(x)|}{\mu\|\chi_a\b\|_{L^1}}  \right)\\
   &\les&3^d C\left (\mu \|\chi_{\bf a}\b\|_{L^1} + \int |f(x)|\chi_{\bf a}(x)
 \,  \log^+ \sum_{|{\bf b}-{\bf a}|\le 3}\frac{\|\chi_{\bf b}(x)|f(x)|   }{\mu\|\chi_a\b\|_{L^1}}  \right)\\
  &\les&3^d C\left (\mu \|\chi_{\bf a}\b\|_{L^1} +\|f\chi_{\bf a}\|_{L^1}
 \,  \log^+\sup_{{\bf a}\in\ZZZ^d} \sum_{|{\bf b}-{\bf a}|\le 3}\frac{\|\chi_{\bf b}f\|_{L^\infty}   }{\mu\|\chi_a\b\|_{L^1}}  \right)\
\eeaa
Now,
\beaa
\|M_0 f\|_{L^1}&\les &
 \int_0^\infty |\{x: |M_0 f_{<\la} (x)|>\lambda\}|\, d\lambda +  
 \int_0^\infty |\{x: |M_0 f_{\ge \lambda}(x)|>\lambda\}|\, d\lambda\\
 &\les& C\|f\|_{L^1}+\sum_{\bf a\in\ZZZ^d}J_{\bf a}\\
 &\les& C\|f\|_{L^1}+3^dC\left(\mu \|\b\|_{L^1}+\|f\|_{L^1}\log^+\sup_{{\bf a}\in\ZZZ^d} \sum_{|{\bf b}-{\bf a}|\le 3}\frac{\|\chi_{\bf b}f\|_{L^\infty}   }{\mu\|\chi_a\b\|_{L^1}}  \right)
\eeaa
as desired.

\end{proof}
We also need to consider powers of $M^n$ of $M$
with multipliers $m^{(n)}(\xi)=m(\xi)^n$. Clearly, 
there exists a constant $C>0$ depending only on
$c$ and $d$ such that,
\bea
| m^{(n)}(x)|\le C^n |x|^{-d},\quad
 | \pr_x m^{(n)}(x)|\le C^n |x|^{-d-1},\quad |m^{(n)}(x)|\le C^n (1+|x|)^{-d-6}
\eea
Thus, for a similar  $C>0$,
\bea
\|M^nf\|_{L^1}\le C^n N_{\mu,\b}(f)
\label{eq:CZ-log-n}
\eea

 Let 
$m_k(\xi)=\chi(2^k\xi)m(\xi) $ and  denote by $M_k$
the operator defined by the multiplier $m_k$. Clearly
$M_k f=P_k( Mf)$. We shall also denote by $M_J$  the operator
$P_J M$ with multiplier $m_J=\sum_{k\in J} m_k$ for
any interval $J\subset\ZZZ$. In physical space,
\beaa
M_kf(x)=\int_{\RRR^d}m_k(x-y)f(y) dy,\quad M_{\ge k} f=
\int_{\RRR^d}m_{\ge k}(x-y)f(y) dy
\eeaa
 We have  the following,
\begin{lemma}\label{le:L1} Let $k\in \ZZZ_+\cup \{0\}$ and 
assume that $a_k$ is a function whose frequency 
is  supported in the band $2^{k-1}\le |\xi|\le 2^{k+1}$, or in the case $k=0$ in the ball $|\xi|\le 1$.
Then, there exists a constant $C>0$ such that for all $n\in\NNN$,
\beaa
\|[(M^n)_{\ge k}\,,\, a_k] f\|_{L^1}\le C^n
\|a_k\|_{L^\infty}\|f\|_{L^1}
\eeaa
\end{lemma}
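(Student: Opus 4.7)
The plan is to express the commutator as an integral against the kernel of $(M^n)_{\ge k}$. Decomposing $(M^n)_{\ge k} = \sum_{j \ge k} (M^n)_j$, with $(M^n)_j$ the operator whose multiplier is $\chi(2^{-j}\xi)\, m(\xi)^n$ and whose kernel we denote $K_{n,j}$, translation invariance produces the identity
\[ [(M^n)_j, a_k] f(x) = \int K_{n,j}(x-y) \bigl(a_k(y) - a_k(x)\bigr) f(y) \, dy. \]
Bernstein's inequality applied to the frequency-localized function $a_k$ yields $\|\nab a_k\|_{L^\infty} \les 2^k \|a_k\|_{L^\infty}$, and so by the mean-value inequality and Fubini,
\[ \|[(M^n)_j, a_k] f\|_{L^1} \les 2^k \|a_k\|_{L^\infty} \|f\|_{L^1} \int |z|\,|K_{n,j}(z)|\, dz. \]

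The heart of the proof is the symbol-based estimate $\int |z|\,|K_{n,j}(z)|\, dz \les C^n 2^{-j}$. Rescaling to unit frequency, the multiplier of $(M^n)_j$ equals $\tilde\si_{n,j}(2^{-j}\xi)$ with $\tilde\si_{n,j}(\eta) = \chi(\eta)\, m(2^j\eta)^n$, supported in $|\eta|\sim 1$. A Leibniz computation using the symbol bound \eqref{eq:const-c} shows that for each fixed $|\a|$ (up to order $d+2$, say), $\|\pr^\a \tilde\si_{n,j}\|_{L^\infty} \le C^n$, with $C$ depending only on the constant $c$ and on $d$. Inverting the Fourier transform then gives $K_{n,j}(z) = 2^{dj} H_{n,j}(2^j z)$, where $|H_{n,j}(u)| \le C^n (1+|u|)^{-d-2}$, and a change of variable yields $\int |z|\,|K_{n,j}(z)|\, dz \le C^n 2^{-j}$.

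Summing in $j\ge k$ collapses the geometric series:
\[ \|[(M^n)_{\ge k}, a_k] f\|_{L^1} \les \sum_{j \ge k} 2^k \cdot C^n 2^{-j}\, \|a_k\|_{L^\infty} \|f\|_{L^1} \les C^n \|a_k\|_{L^\infty} \|f\|_{L^1}. \]
The case $k = 0$ needs only a minor adjustment to accommodate the smooth, compactly-supported low-frequency piece $(M^n)_0$, which directly yields $\int |z|\,|K_{n,0}(z)|\, dz \les C^n$ by the same symbol argument. The main obstacle is the constant-tracking in the Leibniz expansion of $\pr^\a m^n$: a careless bound would suggest factorial growth in $n$, and the crucial observation is that because only finitely many derivatives (up to order $d+2$) are ever taken, the number of Leibniz terms is polynomial in $n$, which is absorbed into a slightly larger exponential base for $C^n$.
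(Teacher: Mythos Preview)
Your proof is correct and rests on the same core ideas as the paper's: express the commutator through its kernel, use the frequency localization of $a_k$ to get the Lipschitz bound $|a_k(x)-a_k(y)|\les 2^k|x-y|\,\|a_k\|_{L^\infty}$, and then control the first moment of the kernel with the $C^n$ constant tracked through the symbol bounds for $m^n$.

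The only difference is organizational. You decompose $(M^n)_{\ge k}=\sum_{j\ge k}(M^n)_j$ dyadically, prove the moment bound $\int |z|\,|K_{n,j}(z)|\,dz\les C^n 2^{-j}$ for each piece, and sum the geometric series $\sum_{j\ge k}2^{k-j}$. The paper instead works with the undecomposed kernel $m^{(n)}_{\ge k}$ and splits physical space into a near region $|x-y|\le 2^{-k}$ (handled with $|m^{(n)}_{\ge k}(x)|\le C^n|x|^{-d}$ together with the Lipschitz bound on $a_k$) and a far region $|x-y|\ge 2^{-k}$ (handled with the sharper decay $|m^{(n)}_{\ge k}(x)|\le C^n 2^{-k}|x|^{-d-1}$ and the trivial bound $|a_k(x)-a_k(y)|\le 2\|a_k\|_{L^\infty}$). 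The two arguments are standard reorganizations of one another; your dyadic version has the mild advantage that the constant-tracking for $C^n$ is done once at the level of the rescaled symbol $\tilde\sigma_{n,j}$, while the paper quotes the Mikhlin-type kernel bounds for $m^{(n)}$ directly.
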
 
\begin{proof}:\quad We have,
\beaa
 C(a_k) f:&=&(M^n)_{\ge k}(a_k f)(x)-a_k(x) (M^n)_{\ge k}f(x\\
&=&
\int m^{(n)}\,_{\ge k}(x-y)\big(a_k(y)-a_k(x)\big) f(y)dy
\eeaa
To show that the the integral operator $C(a_k)$ maps
$L^1$ into $L^1$ it suffices to show that,
\beaa
I&=&\sup_{y} I(y)\\
 I(y)&=&  \int |m^{(n)}\,_{\ge k}(x-y)||a_k(y)-a_k(x)|dx\le C^n
\|\a_k\|_{L^\infty}
\eeaa
We write,
\beaa
I(y)&\le &I_1(y)+I_2(y)\\
I_1(y)&=&
\int_{|x-y|\ge 2^{-k}} |m^{(n)}\,_{\ge k}(x-y)||a_k(y)-a_k(x)|dx\\
I_2(y)&=&\int_{|x-y|\le 2^{-k}}| m^{(n)}\,_{\ge
k}(x-y)||a_k(y)-a_k(x)|dx
\eeaa
We have,
\beaa
|a_k(y)-a_k(x)|\le |x-y|\sup_{z\in[x,y]} |\pr a_k(z)|\les 2^k|x-y|\,
\|a_k\|_{L^\infty}
\eeaa
We also have, 
\beaa
|m^{(n)}\,_{\ge k}(x)|\le C^n  |x|^{-d}
\eeaa
Thus,
\beaa
I_2(y)&\le & C^n\|a_k\|_{L^\infty}
 \int_{|x-y|\le 2^{-k}}|x-y|^{-d}2^k|x-y|dx\les C^n
\|a_k\|_{L^\infty}
\eeaa
Also, since,
\beaa
|m^{(n)}\,_{\ge k}(x)|\le C^n 2^{-k} |x|^{-d-1}
\eeaa
\beaa
I_1(y)&\le& C^n\|a_k\|_{L^\infty}
\int_{|x-y|\ge 2^{-k}}  2^{-k}|x-y|^{-d-1}dx\les C^n
\|a_k\|_{L^\infty}
\eeaa
as desired.
\end{proof}
We shall now prove the following,
\begin{proposition}Let $M$ be a Calderon-Zygmund operator  on $\RRR^2$ with the symbol
satisfying \eqref{eq:const-c} and
$a=a(x)$  a smooth function verifying the bound,
\bea
\|a\|_{B^1_{2,1}(\RRR^2)}\le  A
\eea
Then, for every  positive integer $n$ we have,
\bea
\|(a M)^n f\|_{L^1}\le C^n A^n N(f)
\eea
with $N(f)$ defined by \eqref{eq:Thm1}.
\end{proposition}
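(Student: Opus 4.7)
The plan is to expand $(aM)^n f$ via the Littlewood--Paley decomposition $a = \sum_{k \ge 0} a_k$, so that
$$
(aM)^n f = \sum_{\k = (k_1, \dots, k_n)} a_{k_1} M a_{k_2} M \cdots a_{k_n} M f,
$$
and to reduce to an estimate of each multilinear term summed over $\k$. The hypothesis $\|a\|_{B^1_{2,1}(\RRR^2)} \le A$, combined with Bernstein's inequality, gives $\sum_k \|a_k\|_{L^\infty} \les A$, so it suffices to prove a per-$\k$ bound of shape $C^n \prod_i \|a_{k_i}\|_{L^\infty}$ times a weight that is summable in the frequency gaps $|k_j - k_{j-1}|$.

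For fixed $\k$, I proceed by induction on the multilinearity depth. The base case $n=1$ follows from $\|a_{k_1} M f\|_{L^1} \le \|a_{k_1}\|_{L^\infty} \|Mf\|_{L^1}$ combined with the log-$L^1$ bound of the first lemma. For the inductive step, split the leftmost Calderon--Zygmund factor as $M = M_{\ge k_1} + M_{<k_1}$, yielding the three-term identity
\begin{align*}
M a_{k_1} M a_{k_2} \cdots a_{k_{n-1}} M &= a_{k_1} M_{\ge k_1} M a_{k_2} \cdots a_{k_{n-1}} M \\
&\quad + [M_{\ge k_1}, a_{k_1}] M a_{k_2} \cdots a_{k_{n-1}} M \\
&\quad + M_{<k_1} a_{k_1} M a_{k_2} \cdots a_{k_{n-1}} M.
\end{align*}
The first term has $a_{k_1}$ pulled out in $L^\infty$, while $M_{\ge k_1} M$ merges into a single CZ-type operator to which \eqref{eq:CZ-log-n} applies; the remaining product contains one fewer atom and one fewer $M$, so the inductive hypothesis closes the step. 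The second term is handled by Lemma \ref{le:L1}, which gives $\|[M_{\ge k_1}, a_{k_1}]\|_{L^1 \to L^1} \le C \|a_{k_1}\|_{L^\infty}$, again reducing to depth $n-1$.

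The third term $M_{<k_1} a_{k_1} M a_{k_2} \cdots a_{k_{n-1}} M$ is the heart of the argument. I further Littlewood--Paley decompose each remaining $M = \sum_\ell M_\ell$, writing the operator as
$$
M_{<k_1} a_{k_1} \sum_{\ell_1, \dots, \ell_{n-1}} M_{\ell_1} a_{k_2} M_{\ell_2} \cdots a_{k_{n-1}} M_{\ell_{n-1}}.
$$
The outermost $M_{<k_1}$ is handled by the weak-$L^1$/logarithmic mechanism of the first lemma with an appropriately chosen weight $\beta$ depending on the inner-operator output, producing the \emph{single} logarithmic loss $N(f)$ with constants that can be made harmlessly dependent on $\underline{\ell}$. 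The inner multilinear operator $M_{\ell_1} a_{k_2} M_{\ell_2} \cdots a_{k_{n-1}} M_{\ell_{n-1}}$ is estimated on $L^2$ via boundedness of each factor $a_{k_j} M_{\ell_j}$ combined with almost-orthogonality and Bernstein; this yields exponential decay in each adjacent frequency gap $|\ell_j - \ell_{j-1}|$ or $|k_j - k_{j-1}|$, which makes both the $\underline{\ell}$-sum and, ultimately via the Besov weights, the $\k$-sum converge.

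The main obstacle is constant control: keeping a genuine $C^n$ factor (not $C^{n^2}$), and ensuring that $N(f)$ enters only once rather than compounding at every inductive step. This forces the log-$L^1$ lemma to be invoked exclusively at the outermost step (the one involving $M_{<k_1}$), while every internal reduction must be a strong $L^1 \to L^1$ or $L^2 \to L^2$ estimate. The exponential frequency-gap decay in the inner multilinear bound is essential: without it, summation over the $\k$ and $\underline{\ell}$ multi-indices would produce polynomial-in-$n$ combinatorial blowup that would overwhelm the $C^n$ budget. With these ingredients in place, the per-$\k$ bound aggregates to $C^n A^n N(f)$, as required.
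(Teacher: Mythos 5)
Your proposal follows essentially the same route as the paper: Littlewood--Paley decompose $a=\sum_k a_k$, induct on the multilinearity depth, split the outermost Calderon--Zygmund factor into $M_{\ge k_1}+M_{<k_1}$, reduce the $M_{\ge k_1}$ piece via the commutator estimate of Lemma~\ref{le:L1} (and merging $M_{\ge k_1}M$ into a higher power $M^{l+1}$), and handle $M_{<k_1}$ by expanding the remaining $M$'s into $M_{\ell_i}$ and applying the logarithmic $L^1$ lemma once at the outermost step with a multi-index-dependent scalar $\mu([\ell])$ so that the gap decay $2^{-\min(|\ell_m-\ell_{m-1}|,|\ell_m-k_m|)}$ absorbs the summation over $[\ell]$ and the Besov norm absorbs the summation over $\k$. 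The one place your sketch drifts slightly from the paper is the claim that the inner operator $M_{\ell_1}a_{k_2}\cdots a_{k_{n-1}}M_{\ell_{n-1}}$ is estimated on $L^2$: the paper in fact needs a pair of estimates for it --- an $L^1$ bound carrying the $2^{-2\alpha([\ell])}$ decay and a weighted $L^\infty$ bound to feed the logarithm in $N_{\mu,\beta}$ --- with $L^2$ and Bernstein appearing only as intermediaries inside the single-step recursion for the $L^1$ bound; but this is a matter of which norms the almost-orthogonality mechanism is formulated in rather than a different strategy.
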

\begin{remark}\quad Observe that the proposition remains 
valid if we replace $(aM)^n$ by $a_{(1)}M_{(1)} a_{(2)} M_{(2)}\ldots a_{(n)} M_{(n)}$
with 
\beaa
\|a_{(i)}\|_{B^1_{2,1}(\RRR^d)}\le  A,\qquad i=1,\ldots n
\eeaa
and $M_1,M_2,\ldots M_n$ translation invariant Calderon-Zygmund operators with
symbols which  are uniformly bounded by the same constant $c$,
 see \eqref{eq:const-c}.
\end{remark}
 The proof follows immediately from the following lemma.
\begin{lemma}\label{le:main} Let $(k_1,...,k_n)$ be an n-tuple of non-negative 
integers and assume that  the functions  $a_{k_i}$ with  $0\le i\le
n$ have frequencies supported in the dyadic shells  $[2^{k_{i-1}}, 2^{k_{i+1}}]$, or
in the case $k_i=0$ in the ball $|\xi|\le 1$.
Then for some positive constant $B$,
\bea
\|Ma_{k_1}M\ldots a_{k_n}M f \|_{L^1}&\les&  B^n 
A_{k_1\ldots k_n}N(f)\label{eq:1.2.4}
\eea
where
\bea
A_{k_1\ldots k_n}= \|a_{k_1}\|_{H^1}\cdots
\|a_{k_n}\|_{H^1} \label{eq:1.2.5}
\eea
\end{lemma}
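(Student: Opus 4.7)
\emph{Plan.} I will prove the lemma by induction on $n$, carrying, per Remark 2.4, the slightly stronger hypothesis that \eqref{eq:1.2.4} holds when the $n+1$ copies of $M$ are replaced by any translation-invariant Calderon--Zygmund operators $M_{(0)},\ldots,M_{(n)}$ whose symbols satisfy \eqref{eq:const-c} with a uniform constant. The base case $n=0$ amounts to $\|M_{(0)}f\|_{L^1}\lesssim N(f)$, which follows from Lemma 2.2 with $\beta(x)=\langle x\rangle^{-3}$ and $\mu=1$: since $\|\chi_{\mathbf{a}}\beta\|_{L^1}\sim\langle\mathbf{a}\rangle^{-3}$, the localized suprema appearing in $N_{\mu,\beta}(f)$ are dominated by $\|\langle x\rangle^3 f\|_{L^\infty}$, exactly matching $N(f)$.

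For the inductive step I peel off the first block using the splitting
$$
M a_{k_1}=a_{k_1} M_{\ge k_1}+[M_{\ge k_1},a_{k_1}]+M_{<k_1}a_{k_1},
$$
which produces three contributions $(I)$, $(II)$, $(III)$ when applied to $Ma_{k_2}\cdots a_{k_n}Mf$. In $(I)$, I extract $a_{k_1}$ in $L^\infty$ using Bernstein in $\RRR^2$, $\|a_{k_1}\|_{L^\infty}\lesssim 2^{k_1}\|a_{k_1}\|_{L^2}\sim\|a_{k_1}\|_{H^1}$, and apply the induction hypothesis to $M_{\ge k_1}Ma_{k_2}\cdots a_{k_n}Mf$; the composite $M_{\ge k_1}M$ has symbol $\chi_{\ge k_1}(\xi)m(\xi)^2$ which verifies \eqref{eq:const-c} uniformly in $k_1$, so the strengthened induction hypothesis applies. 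In $(II)$, Lemma 2.3 yields $\|[M_{\ge k_1},a_{k_1}]g\|_{L^1}\lesssim\|a_{k_1}\|_{L^\infty}\|g\|_{L^1}$, and the induction hypothesis bounds $\|g\|_{L^1}$ for $g=Ma_{k_2}\cdots a_{k_n}Mf$; Bernstein again converts the $L^\infty$ bound on $a_{k_1}$ into the required $H^1$ bound.

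The heart of the argument lies in $(III)$, $M_{<k_1}\!\left(a_{k_1}Ma_{k_2}\cdots a_{k_n}Mf\right)$, where neither commutation nor a direct inductive reduction applies. Following the outline in the introduction, I expand each interior $M$ dyadically, $M=\sum_\ell M_\ell$, obtaining a sum over multi-indices $(\ell_2,\ldots,\ell_n)$ of multilinear operators of the form $a_{k_1}M_{\ell_2}a_{k_2}\cdots a_{k_n}M_{\ell_n}Mf$. Frequency-support considerations, combined with Bernstein and the uniform CZ bounds on each $M_{\ell_j}$, produce exponential decay in the adjacent-frequency differences $|\ell_j-\ell_{j-1}|$ and $|k_j-k_{j-1}|$, so the multi-index sum is absolutely convergent. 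The outer operator $M_{<k_1}$ is then controlled by rerunning the weak-$L^1$/$L^2$ interpolation of Lemma 2.2 on the interior function: splitting at level $\lambda$ and integrating the distribution function delivers exactly one logarithmic factor, which is absorbed into $N(f)$. The main obstacle throughout is to confine this logarithmic loss to a single occurrence, independent of $n$: any iterated log would spoil the required form $B^n A_{k_1\ldots k_n}N(f)$, and this is precisely why $(III)$ must be handled as a single multilinear object rather than by naively iterating $L^1$ bounds for the individual factors $a_{k_i}M$.
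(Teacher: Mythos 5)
Your structural plan mirrors the paper's: the base case reduces to Lemma 2.2 with $\beta=\langle x\rangle^{-3}$, the inductive step splits $Ma_{k_1}$ into $a_{k_1}M_{\ge k_1}+[M_{\ge k_1},a_{k_1}]+M_{<k_1}a_{k_1}$, terms $(I)$ and $(II)$ are disposed of by the commutator Lemma 2.3 plus Bernstein and the induction hypothesis, and the remaining term $(III)$ is attacked by a full dyadic expansion of the interior $M$'s. (The paper formally keeps an $M^l$ in front instead of invoking Remark 2.4, but this is a cosmetic difference; either way the constants grow geometrically and fit inside $B^n$.)

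However, the treatment of $(III)$ has a genuine gap. You say that after expanding into the pieces $a_{k_1}M_{\ell_2}a_{k_2}\cdots a_{k_n}M_{\ell_n}f$ one ``reruns the weak-$L^1$/$L^2$ interpolation of Lemma 2.2 on the interior function,'' and that the exponential decay in adjacent-frequency differences makes the multi-index sum ``absolutely convergent.'' Neither reading of this is correct as stated. If you apply Lemma 2.2 once to the full interior function $a_{k_1}Ma_{k_2}\cdots a_{k_n}Mf$, the $L^1$ bound on that object is \emph{not} $A_{k_1\ldots k_n}\|f\|_{L^1}$ but (by the induction hypothesis) $A_{k_1\ldots k_n}N(f)$, and the log supplied by Lemma 2.2 then stacks on top, producing an iterated logarithm --- exactly what the whole argument must avoid. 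If instead you apply Lemma 2.2 to each fixed multi-index $[l]$ and sum, you run into the constant term $\mu\|\beta\|_{L^1}$ in the definition of $N_{\mu,\beta}$: with a fixed $\mu$ this term does not decay in $[l]$, so the sum over $[l]$ of the resulting estimates diverges. Note also that the exponential decay $2^{-2\alpha([l])}$ from the frequency mismatch lives only in the $L^1$ bound on $M_{[k],[l]}(f)$ (estimate \eqref{eq:1.2.14}); the corresponding $L^\infty$ bound \eqref{eq:1.2.15} has no decay in $[l]$ at all, so ``absolute convergence'' is not available in the norm you ultimately feed into the logarithm.

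What your proposal is missing is the device that makes this work in the paper: in the application of Lemma 2.2 to $M_{[k],[l]}(f)$ one chooses the free parameter $\mu$ to depend on the multi-index,
$$
\mu([l])=A_{k_1\ldots k_n}\,2^{-\alpha([l])},\qquad
\alpha([l])=\frac12\sum_{m=2}^n\min\bigl(|l_m-l_{m-1}|,\,|l_m-k_m|\bigr),
$$
together with the extra factor $2^{-|k_1-l_1|}$ forced by the frequency localization of $\bar M_{<k_1}a_{k_1}M_{l_1}$. With this choice the $\mu([l])\|\beta\|_{L^1}$ contributions sum geometrically over $[l]$, while the $2^{-2\alpha([l])}$ decay in the $L^1$ bound is spent half on summability and half on absorbing the $2^{+\alpha([l])}$ that appears when one divides the $L^\infty$ bound by $\mu([l])$ inside the $\log^+$. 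That bookkeeping --- splitting the available exponential gain between the constant term and the logarithm --- is the actual content of the estimate for $(III)$, and without it the proof does not close. You should state the multi-index-dependent choice of $\mu$, prove the two estimates of Lemma \ref{le:main} (the $L^1$ bound with $2^{-2\alpha}$ decay and the weighted local $L^\infty$ bound), and carry out the summation explicitly.
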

\begin{proof}:\quad 
We prove   by induction on $n$  the following stronger
version of estimate \eqref{eq:1.2.4},
\bea
\|M^{l} a_{k_1}M\ldots a_{k_n}M f \|_{L^1}\les  B_1^{n+l} B_2^n 
A_{k_1\ldots k_n}
N(f)\label{eq:1.2.4-again}
\eea
with appropriately chosen constants constants $B_1, B_2$.
Assume that the estimate has been proved for $(n-1)$
 and any $l\in\NNN$.
Splitting $\bar M:=M ^{l}=\bar M_{<k_1}+\bar M_{\ge k_1}$  we
need to prove, 
\bea
\|\bar M_{\ge k_1}(a_{k_1}Ma_{k_2}\ldots a_{k_n}M )f\|_{L^1}\les
 B_1^{n+l} B_2^n  A_{k_1\ldots
k_n}N(f)\label{eq:1.2.7}\\
\|\bar M_{< k_1}(a_{k_1}Ma_{k_2}\ldots a_{k_n}M) f\|_{L^1}\les
 B_1^{n+l} B_2^n  A_{k_1\ldots
k_n}N(f)\label{eq:1.2.8}
\eea
To deal with the first inequality we write,
\beaa
\bar M_{\ge k_1}a_{k_1}Ma_{k_2}\ldots a_{k_n}M&=&a_{k_1}\bar M_{\ge
k_1}M a_{k_2}\ldots a_{k_n}M \\&+&[\bar M_{\ge
k_1},a_{k_1}]Ma_{k_2}\ldots a_{k_n}M
\eeaa
According to Lemma \ref{le:L1} and the Bernstein inequality $\|a_k\|_{L^\infty}\les \|a_k\|_{H^1}$, 
we have,
\beaa
\|[\bar M_{\ge k_1},a_{k_1}]Ma_{k_2}\ldots a_{k_n}Mf\|_{L^1}\les
 C^{l} \|a_{k_1}\|_{H^1}
\|Ma_{k_2}\ldots a_{k_n}Mf\|_{L^1}
\eeaa
Also,
\bea
\|a_{k_1}\bar M_{\ge k_1} M a_{k_2}\ldots
a_{k_n}Mf \|_{L^1}&\les& \|a_{k_1}\|_{L^\infty}
\|M^{l+1}a_{k_2}\ldots a_{k_n}Mf\|_{L^1}
\eea
Thus, taking into account our induction
hypothesis,
\beaa
\|M_{\ge k_1}(a_{k_1}Ma_{k_2}\ldots a_{k_n}M )f\|_{L^1}&\les &
C^{l}\|a_{k_1}\|_{H^1}\cdot\|Ma_{k_2}M\ldots a_{k_n}M f\|_{L^1}\\
&+& \|a_{k_1}\|_{H^1}
\|M^{l+1}a_{k_2}\ldots a_{k_n}Mf\|_{L^1}\\
&\les& (C^{l} B_1^{n} B_2^{n-1}+B_1^{n+l} B_2^{n-1})A_{k_1\ldots k_n}N(f)\\
&\les& B_1^{n+l} B_2^n A_{k_1\ldots k_n}N(f)
\eeaa
as desired, provided that the constants $B_1, B_2$ are sufficiently large, in fact  we need $ B_1\ge C$ and $B_2\ge 1$.

We now consider the more difficult  term
\beaa
\bar M_{< k_1}(a_{k_1}Ma_{k_2}\ldots a_{k_n}M) f=
\bar M_{< k_1}\big(a_{k_1}M(g)\big)=\bar M_{<
k_1}\big(a_{k_1}M_{k_1}(g)\big)
\eeaa
with $g=(a_{k_2}Ma_{k_3}\ldots a_{k_n}M) f$. 
Note that if $k_1=0$ the operator $\bar M_{<k_1}$ is a multiplier with a smooth symbol
of compact support. As a consequence it is bounded on $L^1$ and,
with $a_0=a_{k_1}$,
\begin{align*}
\|\bar M_{< 0}(a_{0}Ma_{k_2}\ldots a_{k_n}M) f\|_{L^1}&\le C^l
\|a_{k_1}\|_{H^1} \|Ma_{k_2}\ldots a_{k_n}M) f\|_{L^1}\\ &\les
C^l B_1^{n} B_2^{n-1} A_{k_1...k_n} N(f).
\end{align*}
Therefore to prove
\eqref{eq:1.2.8} we need to consider the case $k_1>0$ and
 estimate,
 \beaa
\|\bar M_{< k_1}(a_{k_1}Ma_{k_2}\ldots a_{k_n}M f) \|_{L^1}
\eeaa
We further decompose as follows,
\bea
\bar M_{< k_1}(a_{k_1}Ma_{k_2}\ldots a_{k_n}M f)&=&
\sum_{[l]_n}\bar M_{< k_1}M_{[k]_n, [l]_n}(f)
\\
    M_{[k]_n, [l]_n}(f)&=&
 a_{k_1}M_{l_1}a_{k_2}\ldots M_{l_{n-1}}
a_{k_n}M_{l_n} f\nn
\eea
with $[l]_n$ denoting an  arbitrary integer n-tuple $(l_1,...,l_n)\in ({\Bbb Z}_+\cup \{0\})^n$ and $[k]_n=(k_1,\ldots ,k_n)$.  Whenever there is no possibility of confusion we shall drop the index $n$ and write simply  simply write $[k], [l]$.
By the  triangle inequality
\beaa
\|\bar M_{< k_1}(a_{k_1}Ma_{k_2}\ldots a_{k_n}M f) \|_{L^1}\le 
\sum_{[l]_n}
\|\bar M_{< k_1}M_{[k]_n, [l]_n}(f) \|_{L^1}
\eeaa
We note that in the expression $\bar M_{< k_1} a_{k_1}M_{l_1} (a_{k_2}\ldots a_{k_n}M_{l_n} f)$
the frequency $l_1$ is forced to be of the order of $k_1$. This allows us to insert a factor of 
$2^{-|k_1-l_1|}$ in the above expression.
Using \eqref{eq:CZ-log-n} we then derive,
\bea
\|\bar M_{< k_1}M_{[k], [l]}(f)\|_{L^1}\les 
2^{-|k_1-l_1|}
B_1^{l} B_2 N_{\mu(\,[l]\,),\b}\big(\,M_{[k],[l]}( f)\, \big)\label{eq:onemore}
\eea
Here, the notation $\mu(\,[l]\,)$ indicates that the  scalar  $\mu$ will be chosen dependent  on the multi-index $[l]=[l]_n$. Recall that\footnote{For simplicity
of notation we drop  the summation $\sum_{|{\bf b}-{\bf a}|\le 3 }$  which will 
only adds a finite number of terms of the same type. },
$$
N_{\mu,\b}(g)=\mu \|\b\|_{L^1} + \|g\|_{L^1}
\log^+\{\sup_{{\bf a}\in\ZZZ^d} \frac{\|\chi_{\bf a} g\|_{L^\infty}}{\mu
\|\chi_{\bf a} \b\|_{L^1}}\}
$$
We now make the following choice for the scalar $\mu$. The choice will be justified in the lemmas below.
\beaa
\mu([{ l}])&=&A_{k_1\ldots k_n} 2^{-\alpha([{ l}]_n)},\\
\alpha([{ l}]&=&\frac 12 \sum_{m=2}^{n} \min \Big (|l_{m}-l_{m-1}|, |\l_m-k_m|\Big)
\eeaa 
 We also choose  the function 
 \beaa
 \b=(1+|x|)^{-3}.
 \eeaa
 Observe that  the following holds true,
\bea
\bigg( \frac{<\bf b>}{< \bf a>}\bigg)^{-3} \|\chi_{\bf b} \b\|_{L^1}\le
\|\chi_{\bf a} \b\|_{L^1}\le\bigg ( \frac{<\bf b>}{< \bf a>}\bigg)^{3}
 \|\chi_{\bf b} \b\|_{L^1}
 \eea
We will need to make use of the following,
\begin{lemma} The following estimates hold true for the expression,
$$M_{[k],[l]}(f)=a_{k_1} M_{l_1} a_{k_2}....a_{k_n} M_{l_n} f,$$
\bea
\|M_{[k],[l]}(f)\|_{L^1}& \les &  C^{n}  2^{-2\alpha([l]_n)}
A_{k_1..k_n}\|f\|_{L^1} \label{eq:1.2.14}\\
\|\chi_{\bf a}M_{[k],[l]}(f)\|_{L^\infty} &\les& C^n A_{k_1..k_n}\sum_{{\bf b}\in \ZZZ^2}< |{\bf b}-{\bf a}|>^{-3} 
\|\chi_{\bf b} f\|_{L^\infty} \label{eq:1.2.15} 
\eea
\label{le:main}
\end{lemma}
We postpone the proof of the  lemma to the end of this section.

Now, using \eqref{eq:onemore}
\beaa
&&\|\bar M_{< k_1}(a_{k_1}Ma_{k_2}\ldots a_{k_n}M f) \|_{L^1}\le 
\sum_{[l]_n}
\|\bar M_{< k_1}M_{[k], [l]}(f) \|_{L^1}\\
&&\les\sum_{[l]} 2^{-|k_1-l_1|} \left (\mu(\, [l])\, \|\b\|_{L^1} +\|M_{[k],[l]} (f)\|_{L^1}
\log^+\{\sup_{{\bf a}\in\ZZZ^d} \frac{\|\chi_{\bf a}M_{[k],[l]}( f)\|_{L^\infty}}{\mu(\,[l]\,)
\|\chi_{\bf a} \b\|_{L^1} } \}\right)
\eeaa
Given our choice of $\mu(\,[l]\,)$ we have,
\beaa
\sum_{\,[l]\,} 2^{-|k_1-l_1|} \mu(\,[l]\,)& =&A_{k_1...k_n} 
\sum_{[\,l\,]} 2^{-|k_1-l_1|} 2^{-\alpha(\,[l])\,} \\ &=& 
A_{k_1...k_n} 
\sum_{[l]} \left (2^{-|k_1-l_1|}\c 2^{-\frac 12\min \left (|l_{2}-l_{1}|, |l_2-k_2|\right)}\c\ldots\c
2^{-\frac 12\min \left (|l_{n}-l_{n-1}|, |l_n-k_n|\right)}\right)\\
&\les& A_{k_1...k_n} 
\eeaa
Thus, in order to end he proof of \eqref{eq:1.2.8}
it suffices to  
show that  
 \bea
\sum_{[l]} 2^{-|k_1-l_1|}  \|M_{[k],[l]} (f)\|_{L^1}
\log^+\{\sup_{{\bf a}\in\ZZZ^d} \frac{\|\chi_{\bf a}M_{[k],[l]}( f)\|_{L^\infty}}{\mu(\,[l]\,)
\|\chi_{\bf a} \b\|_{L^1} } \}\les C^n A_{k_1\ldots k_n} N(f)
 \eea
 Using \eqref{eq:1.2.14} and \eqref{eq:1.2.15} 
 and recalling the definition  of $\mu[l]$, $\b(x)$, we obtain
\beaa
&&\sum_{[l]} 2^{-|k_1-l_1|}  \|M_{[k],[l]} (f)\|_{L^1}
\log^+\{\sup_{{\bf a}\in\ZZZ^d} \frac{\|\chi_{\bf a}M_{[k],[l]}( f)\|_{L^\infty}}{\mu(\,[l]\,)
\|\chi_{\bf a} \b\|_{L^1} } \}\\
&&\les
C^{n} A_{k_1...k_n} \sum_{[l]} 2^{-|k_1-l_1|} 2^{-2\alpha([l])}
\|f\|_{L^1} \,\log^+\Big{\{}C^{n-1}  \sup_{{\bf a}\in\ZZZ^d}  \sum_{{\bf b}\ne{\bf a}}
 <|{\bf b}-{\bf a}|>^{-3} \frac{2^{\alpha([l])} \|\chi_{\bf b} f\|_{L^\infty}}{
\|\chi_{\bf a} \b\|_{L^1}}\Big{\}}\\
&&\les 
C^{2n} A_{k_1...k_n}  \sum_{[l]} 2^{-|k_1-l_1|} 2^{-\alpha([l])}
\|f\|_{L^1} \,\log^+\Big{\{} \sup_{{\bf a}\in\ZZZ^d}  \frac{\|\chi_{\bf a} f\|_{L^\infty}}{
\|\chi_{\bf a} \b\|_{L^1}}\Big{\}}\\
&&\les C^{2n} A_{k_1...k_n} 
\|f\|_{L^1} \,\log^+\Big{\{} \sup_{{\bf a}\in\ZZZ^d}  <|{\bf a}|>^3 {\|\chi_{\bf a} f\|_{L^\infty}}\Big{\}}\\
&&\les C^{2n} A_{k_1...k_n} N(f),
\eeaa
as desired.
 Here we  have used,
  $$ (1+|{\bf a}|)^3 \les (1+|{\bf b}-{\bf a}|)^{3}(1+|{\bf b}|)^{3} $$
  and the  finiteness of the sum
$$
\sum_{[l]} 2^{-|k_1-l_1|} 2^{-\alpha(\,[l]\,)}=
\sum_{[l]} \left (2^{-|k_1-l_1|} 2^{-\frac 12\min \left (|l_{2}-l_{1}|, |l_2-k_2|\right)}\c\ldots\c
2^{-\frac 12\min \left (|l_{n}-l_{n-1}|, |l_n-k_n|\right)}\right)
$$ 
It remains to prove Lemma \ref{le:main}.
Estimate  \eqref{eq:1.2.14} follows recursively provided that we can 
establish the following
\bea
\|M_{l_{m-1}} a_{k_m} P_{l_m} h\|_{L^1}\les \|a_{k_m}\|_{H^1} 2^{-\min(|l_m-l_{m-1}|,
|l_m-k_m|)} \|h\|_{L^1} 
\label{eq:rec-main}
\eea
In fact, since $M_{l_{m-1}}$ is bounded in $L^1$,
it suffices to prove,
\bea
\|P_{l_{m-1}} a_{k_m} P_{l_m} h\|_{L^1}\les \|a_{k_m}\|_{H^1} 2^{-\min(|l_m-l_{m-1}|,
|l_m-k_m|)} \|h\|_{L^1} 
\label{eq:rec-main'}
\eea
On the other hand, estimate  \eqref{eq:1.2.15} is a localized version of the trivial estimate
$$
\|a_{k_1} M_{l_1} a_{k_2}....a_{k_n} M_{l_n} f\|_{L^\infty}\les   C^{n} A_{k_1..k_n}
\|f\|_{L^\infty},
$$
which holds since each of the frequency localized Calderon-Zygmund operators 
$M_{l}$ are bounded on $L^p$ including $p=1,\infty$.
 Its 
localized version
follows inductively from the estimate,
\bea
\|\chi_{\bf a}M_{l}\chi_{\bf b}g\|_{L^\infty}\le C(1+|{\bf b}-{\bf a}|)^{-3}\|g\|_{L^\infty},\qquad l\ge 0
\eea
which holds true  on account of the sharp localization of the kernel of $M_l$,  
in physical space,  due to the smoothness of the symbol of $M$ at zero. 
 Indeed the kernel of $m(x-y)$
   of  the
 operator $\chi_{\bf a}M_{l}\chi_{\bf b}$ verifies,
 \beaa
 |m(x-y)|\le C\chi_{\bf a}(x)(1+|x-y|)^{-6}\chi_{\bf b}(y)\le C
 (1+ |{\bf b}-{\bf a}|)^{-3} m_1(x-y)
 \eeaa
 with $m_1(x-y)=(1+|x-y|)^{-3}$ in $L^1$.
 
To prove \eqref{eq:rec-main'} we distinguish the following cases.
\begin{enumerate}
\item Assume  $ l_{m-1}<k_m$.
 Observe that $P_{l_{m-1}}(a_{k_m}P_{l_m} h)=0$ unless
$|l_m-k_m|\le 2$.
 Therefore,   since 
 \beaa
 \min\big(|l_m-l_{m-1}|,
|l_m-k_m|)\big)\approx 1
\eeaa
  we have , 
\beaa
\|P_{l_{m-1}}(a_{k_m}P_{k_m} h)\|_{L^1}&\les &
 \|a_{k_m}\|_{H^1} \|h\|_{L^1} \\
 &\les& 2^{-\min(|l_m-l_{m-1}|,
|l_m-k_m|)} \|a_{k_m}\|_{H^1} \|h\|_{L^1} 
\eeaa
 as desired.

\item Assume  $l_{m-1}>k_m$ .   
In this case $P_{l_{m-1}}(a_{k_m}P_{l_m} h)=0$ unless
 $|l_{m-1}- l_m|\le 2$. Therefore we have again,
  \beaa
 \min\big(|l_m-l_{m-1}|,
|l_m-k_m|)\big)\approx 1
\eeaa
and 

\beaa
\|P_{l_{m-1}}(a_{k_m}P_{l_{m-1}} h)\|_{L^1}&\les &
\|a_{k_m}\|_{H^1}\|h\|_{L^1}\\
&\les&2^{-\min(|l_m-l_{m-1}|,
|l_m-k_m|)} \|a_{k_m}\|_{H^1} \|h\|_{L^1} 
\eeaa
\item If $l_{m-1}= k_m$,  then  $P_{l_{m-1}}(a_{k_m}P_{l_m} h)=0$
unless 
 $l_m\le k_m$. Then, using the Bernstein inequality
 $\|P_{l_m} h\|_{L^2}\les 2^{l_m} \|h\|_{L^2}$ we derive,
\beaa
\|P_{l_{m-1}}(a_{k_m} P_{l_m}h)\|_{L^1}&\les&
\|(a_{k_m}P_{l_m} h)\|_{L^1}\les \|a_{k_m}\|_{L^2}\|P_{l_m} h)\|_{L^2}\\
&\les& 2^{-k_m} \|a_{k_m}\|_{H^1} \|P_{l_m} h\|_{L^2}\\
 &\les& 2^{-k_m+l_m}
\|a_k\|_{H^1}\|h\|_{L^1}
\eeaa
Since  in this case $l_m\le k_m=l_{m-1}$ we have,
\beaa
\min\big(|l_m-l_{m-1}|,
|l_m-k_m|\big)=k_m-l_m
\eeaa
Therefore,
\beaa
 \|P_{l_{m-1}}(a_{k_m} P_{l_m}h)\|_{L^1}&\les&2^{-\min(|l_m-l_{m-1}|,
|l_m-k_m|)} \|a_{k_m}\|_{H^1} \|h\|_{L^1} 
\eeaa
as desired.
\end{enumerate}
Thus in all cases inequality 
\eqref{eq:rec-main'} is verified.
\end{proof}

\section{Proof of the main theorem}
We  need to prove the estimate
\beaa
\sup_{t\in[0,1]}\|u(t)\|_{L^1(\RRR^d)}\les  C N(g)
\eeaa
where $d=2$ and 
$$
N(g)=\|g\|_{L^1([0, 1]\times \RRR^2)}\,\,\log^{+} \big{\{}
\sup_{{\bf a}\in \ZZZ^2}
|{\bf a}|^2 \|\chi_{\bf a} g\|_{L^\infty([0,1]\times\RRR^2)}\big{\}} + 1
$$ 
 for a solution
to \eqref{eq:Tr1}
$$
\pr_t u- a(t,x) Mu = g,\qquad u(0,x)=0,
$$
where the coefficient $a$ admits the decomposition
\be{eq:decomp}
a=\pr_t b+c
\end{equation}
with $a,b$ and $c$ satisfying the conditions \eqref{eq:Tr2}, \eqref{eq1-norm}
and \eqref{eq2-norm}.

We define the iterates $u^{0}=0, u^1,\ldots u^{n}, u^{n+1}$
according to the recursive formula,
\bea
\pr_tu^{(n+1)}(t,x)=a(t_0,x) M u^{(n)}(t,x)+g(t,x),\quad 
u^{(n+1)}(0)=0.\label{eq:iterates1}
\eea
\subsection{First iterates} To illustrate our method
consider first the case of the iterate,
\beaa
u^{(2)}(t_0)=\int_0^{t_0}g(t_1) dt_1+\int_0^{t_0}a(t_1)dt_1
 M \int_0^{t_1} g(t_2) dt_2
\eeaa
Thus,
\beaa
\|\sup_{t_0\in [0,1]}u^{(2)}(t_0)\|_{L^1(\RRR^d)}&\les &
\|\sup_{t_0\in [0,1]}\int_0^{t_0}g(t_1) dt_1\|_{L^1}
+\|\sup_{t_0\in
[0,1]} I(t_0)\|_{L^1}\\
I(t_0)&=&\int_0^{t_0}a(t_1)dt_1
 M \int_0^{t_1} g(t_2) dt_2
\eeaa
The first term is trivial.
To estimate the second term we need to make use of the decomposition \eqref{eq:decomp}.
Thus,
\beaa
I(t_0)&=&I_b(t_0)+I_c(t_0) \\
I_c(t_0)&=&\int_0^{t_0}c(t_1)dt_1
  \int_0^{t_1}M g(t_2) dt_2\\
I_b(t_0)&=&\int_0^{t_0}\pr_{t_1}b(t_1)dt_1
  \int_0^{t_1} Mg(t_2) dt_2\\
&=&b(t_0)\int_0^{t_0}M g(t_2) dt_2-\int_0^{t_0}b(t_1)Mg(t_1) dt_1\\
&:=&
I_{b,1}(t_0)+I_{b,2}(t_0)
\eeaa
To estimate $I_c$ we use the fact that, for $d=2$, the Besove space
$B_{2,1}^1(\RRR^d)$ embedds in $L^\infty(\RRR^d)$ and the estimate,
\beaa
\|Mg(t)\|_{L^1(\RRR^d)}\les \|g(t)\|_{L^1(\RRR^d)}\log^{+}
\|g(t)\|_{L^\infty(\RRR^d)} +1\les N(g(t))
\eeaa
 Thus,
\beaa
\|\sup_{t_0\in [0,1]}I_c(t_0)\|_{L^1}&\les &
\int_0^{1}\|c(t_1)\|_{L^\infty}dt_1
  \int_0^{t_1}\|M g(t_2)\|_{L^1(\RRR^d)} dt_2\\
&\les&\int_0^{1}\|c(t_1)\|_{B^1_{2,1}(\RRR^d)}dt_1\int_0^{t_1}N(g)(t_2) dt_2\\
&\les&\|c\|_3 N(g)
\eeaa
On the other hand, decomposing $b=b_0+\sum_{k\in \ZZZ_+} b_k$,
\beaa
\|\sup_{t_0\in [0,1]}I_{b,1}(t_0)\|_{L^1(\RRR^d)}&\les& 
\|\sup_{t_0\in [0,1]} b(t_0)\|_{L^\infty(\RRR^d)}\int_0^{t_0}\|M
g(t_2)\|_{L^1(\RRR^d)} dt_2\\
&\les& N(g)\|\sup_{t_0\in [0,1]} b(t_0)\|_{L^\infty(\RRR^d)} \\
&\les& N(g)\sum_{k\in\ZZZ_+\cup \{0\}}\|\sup_{t_0\in [0,1]} b_k(t_0)\|_{L^\infty(\RRR^d)}
\eeaa 
We now appeal to the following straightforward  lemma,
\begin{lemma} The following calculaus inequality holds true (see \eqref{eq1-norm})
for $k\ge 0$,
\beaa
\sup_{t\in [0,1]}\|b_k(t)\|_{H^1(\RRR^d)}\les \|\pr_t
b_k\|_{L_t^2H^1}^{1/2}\|b_k\|_{L_t^2H^1}^{1/2}\les 2^{-k/2}
\|b_k\|_{2}
\eeaa
Also, 
\beaa 
\|\sup_{t\in [0,1]} b_k(t)\|_{L^\infty(\RRR^d)}\les 
\|\pr_t b_k\|_{L_t^2H^1}^{1/2}\|b_k\|_{L_t^2H^1}^{1/2}\les 2^{-k/2}
\|b_k\|_{2}
\eeaa

\label{Le:2.3}
\end{lemma}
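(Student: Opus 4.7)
The plan is to prove a single $X$-valued Sobolev-type interpolation in time on the interval $[0,1]$ and then combine it with Bernstein's inequality in space. First I would establish the auxiliary inequality
\begin{equation*}
\sup_{t\in[0,1]}\|f(t)\|_X^{2}\ \le\ \|f\|_{L^{2}_{t}X}^{2}\ +\ 2\,\|f\|_{L^{2}_{t}X}\,\|\pr_{t}f\|_{L^{2}_{t}X}
\end{equation*}
for any $f\in H^{1}([0,1];X)$, with $X$ a Hilbert space. This follows from $\frac{d}{dt}\|f\|_X^{2}=2\langle f,\pr_{t}f\rangle_{X}$: integrating from a base point $t_{0}$ chosen so that $\|f(t_{0})\|_X^{2}\le \|f\|_{L^{2}_{t}X}^{2}$ (such a point exists by the mean value theorem, since the pointwise minimum of $\|f\|_X^{2}$ is dominated by its average over $[0,1]$) and applying Cauchy--Schwarz yields the claim.

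Next I apply this with $X=H^{1}(\RRR^{2})$ and $f=b_{k}$. The Littlewood--Paley localization of $b_k$ at dyadic frequency $2^{k}$ ($k\ge 0$) gives the Bernstein estimate $\|b_{k}(t)\|_{H^{1}}\les 2^{-k}\|b_{k}(t)\|_{H^{2}}$, so that $\|b_{k}\|_{L^{2}_{t}H^{1}}\les 2^{-k}\|b_{k}\|_{L^{2}_{t}H^{2}}\le 2^{-k}\|b_{k}\|_{2}$, while by definition $\|\pr_{t}b_{k}\|_{L^{2}_{t}H^{1}}\le\|b_{k}\|_{2}$. Plugging these into the auxiliary inequality, both the quadratic term $\|b_{k}\|_{L^{2}_{t}H^{1}}^{2}$ and the cross term $\|b_{k}\|_{L^{2}_{t}H^{1}}\|\pr_{t}b_{k}\|_{L^{2}_{t}H^{1}}$ are $\les 2^{-k}\|b_{k}\|_{2}^{2}$, so that
\begin{equation*}
\sup_{t\in[0,1]}\|b_{k}(t)\|_{H^{1}}^{2}\ \les\ \|b_{k}\|_{L^{2}_{t}H^{1}}\,\|\pr_{t}b_{k}\|_{L^{2}_{t}H^{1}}\ \les\ 2^{-k}\|b_{k}\|_{2}^{2},
\end{equation*}
which is precisely the first claimed chain of inequalities.

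For the second assertion I would use the spatial Bernstein inequality in dimension $d=2$: since $b_{k}(t)$ has Fourier support in the annulus $|\xi|\sim 2^{k}$ (or $|\xi|\le 1$ when $k=0$), one has $\|b_{k}(t)\|_{L^{\infty}(\RRR^{2})}\les 2^{k}\|b_{k}(t)\|_{L^{2}(\RRR^{2})}\les \|b_{k}(t)\|_{H^{1}(\RRR^{2})}$. Taking the supremum in $t$ and invoking the $H^{1}$ bound just established yields $\sup_{t}\|b_{k}(t)\|_{L^{\infty}}\les 2^{-k/2}\|b_{k}\|_{2}$, with the intermediate expression $\|\pr_{t}b_{k}\|_{L^{2}_{t}H^{1}}^{1/2}\|b_{k}\|_{L^{2}_{t}H^{1}}^{1/2}$ coming directly from the $H^{1}$ bound.

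There is no real obstacle; the proof is a routine combination of the fundamental theorem of calculus in time with Littlewood--Paley Bernstein in space. The only point meriting care is verifying that the additive term $\|b_{k}\|_{L^{2}_{t}H^{1}}^{2}$ in the generic Sobolev interpolation can indeed be absorbed into the product form asserted in the lemma: this works precisely because the frequency localization converts the assumed $L^{2}_{t}H^{2}$ bound on $b_{k}$ into a factor $2^{-k}$ that matches the gain coming from the cross term.
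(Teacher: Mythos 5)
Your proof is correct, and since the paper states this lemma without supplying any argument (labeling it ``straightforward''), there is no proof in the paper to compare against; yours supplies exactly the natural argument the authors had in mind: the one--dimensional Sobolev interpolation in time via the fundamental theorem of calculus and Cauchy--Schwarz, combined with Bernstein in space to convert the $L^2_t H^2$ control of $b_k$ into the factor $2^{-k}$.

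One observation worth recording: as you correctly flag, what the FTC/Cauchy--Schwarz argument actually yields is $\sup_t\|b_k(t)\|_{H^1}^2\les \|b_k\|_{L^2_tH^1}^2+\|b_k\|_{L^2_tH^1}\|\pr_tb_k\|_{L^2_tH^1}$, and the additive term does not literally reduce to the pure product form $\|\pr_tb_k\|_{L^2_tH^1}^{1/2}\|b_k\|_{L^2_tH^1}^{1/2}$ stated as the middle expression in the lemma (indeed a time-independent $b_k$ shows that intermediate bound cannot hold verbatim). Your absorption step, using $\|b_k\|_{L^2_tH^1}\les 2^{-k}\|b_k\|_2$ so that $2^{-2k}\le 2^{-k}$ controls the quadratic term, is exactly what makes the final inequality $\les 2^{-k/2}\|b_k\|_2$ go through; that final bound is the one actually used in the paper, so the slight imprecision in the lemma's middle expression is harmless. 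The $L^\infty$ case follows identically once you note, as you do, that Bernstein in $\RRR^2$ gives $\|b_k(t)\|_{L^\infty}\les 2^k\|b_k(t)\|_{L^2}\les\|b_k(t)\|_{H^1}$ for frequency-localized $b_k$, with the $k=0$ case handled trivially.
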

In view of the Lemma we deduce,
\beaa
\|\sup_{t_0\in [0,1]}I_{b,1}(t_0)\|_{L^1(\RRR^d)}&\les& 
N(g)\sum_{k\in\ZZZ_+\cup\{0\}}\|b_k\|_{L_t^2H^1}\\
&\les&N(g)\sum_{k\in\ZZZ_+\cup\{0\}}2^{-k/2} \|b_k\|_2
\les N(g) \|b\|_2
\eeaa
Similarly,
\beaa
\|\sup_{t_0\in [0,1]}I_{b,2}(t_0)\|_{L^1(\RRR^d)}&\les&
\|\int_0^{1}b(t_1)Mg(t_1) dt_1\|_{L^1(\RRR^d)}\\
&\les&N(g)\sup_{t_1\in[0,1]}\|b(t_1)\|_{L^\infty}\\
&\les& N(g) \|b\|_2
\eeaa
Therefore,
\beaa
\|\sup_{t_0\in [0,1]}u^{(2)}(t_0)\|_{L^1(\RRR^d)}\les
N(g)\big(\|b\|_2+\|c\|_3\big)
\eeaa
\begin{remark}
Observe that there is room of a $1/2$ derivative 
in the estimates for $I_b$. This  room will play an important role
for treating the general iterates $u^{(n+1)}$.
\end{remark}
Consider now the  more dificult  case of the  iterate $u^{(3)}$,
\beaa
u^{(3)}&=&\int_0^{t_0}g(t_1)
dt_1+\int_{0}^{t_0}a(t_1) M u^{(2)}(t_1)dt_1\\
&=&\int_0^{t_0} g(t_1)dt_1 +
\int_0^{t_0} a(t_1)dt_1M\big(\int_0^{t_1}g(t_2) dt_2\big)\\
&+&\int_0^{t_0}\int_0^{t_1}\int_0^{t_2}a(t_1)M a(t_2)
Mg(t_{3})dt_1 dt_2 dt_{3}
\eeaa
We concentrate our attention on the last term,
\beaa
I(t_0)=\int_0^{t_0}\int_0^{t_1}\int_0^{t_2}a(t_1)M a(t_2)
Mg(t_{3})dt_1 dt_2 dt_{3}
\eeaa
As we decompose each $a(t_i)=\pr_t b(t_i) +c(t_i)$ with $i=1,2$ we notice 
that we can only integrate by parts only one of the potentially two terms containing 
$\pr_t b(t_i)$. We need to make that choice judiciously, based on the relative strength
of the terms.  
We begin by decomposing
$a(t_1), a(t_2)$ into their Littlewood-Paley pieces and write,
\beaa
I(t_0)&=&\int_0^{t_0}\int_0^{t_1}\int_0^{t_2}\sum_{k_1,k_2\in\ZZZ_+\cup\{0\}}a_{k_1}(t_1)M
a_{k_2}(t_2) Mg(t_{3})dt_1 dt_2 dt_{3}\\
&=&\int_0^{t_0}\int_0^{t_1}\int_0^{t_2}\sum_{0\le k_1<k_2}+
\int_0^{t_0}\int_0^{t_1}\int_0^{t_2}\sum_{0\le k_1=k_2}
+\int_0^{t_0}\int_0^{t_1}\int_0^{t_2}\sum_{k_1>k_2\ge 0}
\eeaa
In what follows we will tacitly assume that all the integer indices $k_i$ take values in 
the set of non-negative integers and will not write this constraint explicitly. 
Consider the last term,
\beaa
J(t_0)=\int_0^{t_0}\int_0^{t_1}\int_0^{t_2}\sum_{k_1>k_2}a_{k_1}(t_1)M
a_{k_2}(t_2) Mg(t_{3})dt_1 dt_2 dt_{3}
\eeaa
We further decompose,
\beaa
a_{k_1}(t_1)=\pr_{t}b_{k_1}(t_1)+c_{k_1}(t_1)
\eeaa
and concentrate on the term,
\beaa
J_b(t_0)&=&\int_0^{t_0}\int_0^{t_1}\int_0^{t_2}\sum_{k_1>k_2}\pr_{t_1}
b_{k_1}(t_1)M a_{k_2}(t_2) Mg(t_{3})dt_1 dt_2 dt_{3}\\
&=&\sum_{k_1>k_2}b_{k_1}(t_0)\int_0^{t_0}\int_0^{t_2}
M a_{k_2}(t_2) Mg(t_{3}) dt_2 dt_{3}\\
&-&\sum_{k_1>k_2}\int_0^{t_0}\int_0^{t_1} b_{k_1}(t_1)M
a_{k_2}(t_1) Mg(t_{3})dt_1  dt_{3}
\eeaa
Let,
\beaa
J_{b1}(t_0)=\sum_{k_1>k_2}b_{k_1}(t_0)\int_0^{t_0}\int_0^{t_2}
M a_{k_2}(t_2) Mg(t_{3}) dt_2 dt_{3}
\eeaa
and estimate
\beaa
\|J_{b1}(t_0)\|_{L^1}&\les&\sum_{k_1>k_2}
\|b_{k_1}(t_0)\|_{L^\infty}\int_0^{t_0}\int_0^{t_2}
\|M a_{k_2}(t_2) Mg(t_{3})\|_{L^1} dt_2 dt_{3}
\eeaa
Using  Lemma \ref{le:main} we have,
\beaa
\|M a_{k_2}(t_2) Mg(t_{3})\|_{L^1}\les \|a_{k_2}(t_2)\|_{H^1}
N(g)(t_3)
\eeaa
 Also, according to Lemma \ref{Le:2.3}, 
using the norm $\|\,\|_2$ introduced in \eqref{eq2-norm},
\beaa
\|b_{k_1}(t_0)\|_{L^\infty}\les 2^{-k_1/2}\|b_{k_1}\|_2
\eeaa
Hence,
\beaa
\|J_{b1}(t_0)\|_{L^1}&\les&\sum_{k_1>k_2\ge 0} 2^{-k_1/2}
\|b_{k_1}\|_{2}\int_0^{t_0}\|a_{k_2}(t_2)\|_{H^1} dt_2\int_0^{t_2}
 N(g)(t_{3}) dt_{3} dt_3\\
&\les& N(g)\sum_{k_1>k_2\ge 0} 2^{-k_1/2}\|b_{k_1}\|_{2}\|a_{k_2}\|_{1}
\les N(g)\|b\|_2\|a\|_1
\eeaa
The term $J_{b2}=\sum_{k_1>k_2}\int_0^{t_0}\int_0^{t_2} b_{k_1}(t_1)M
a_{k_2}(t_1) Mg(t_{3})dt_1  dt_{3}$
can be treated in exactly the same fashion. Thus,
\bea
\|J_{b}(t_0)\|_{L^1}&\les&N(g)\|b\|_2\|a\|_1
\eea
Consider now the term,
\beaa
J_c(t_0)&=&\int_0^{t_0}\int_0^{t_1}\int_0^{t_2}\sum_{k_1>k_2}
c_{k_1}(t_1)M a_{k_2}(t_2) Mg(t_{3})dt_1 dt_2 dt_{3}
\eeaa
We further decompose
\beaa
a_{k_2}(t_2)=\pr_{t}b_{k_2}(t_2)+c_{k_2}(t_2)
\eeaa
We show how to treat the term,
\beaa
J_c(t_0)&=&\int_0^{t_0}\int_0^{t_1}\int_0^{t_2}\sum_{k_1>k_2}
c_{k_1}(t_1)M\pr_{t}b_{k_2}(t_2) Mg(t_{3})dt_1 dt_2 dt_{3}\\
&=&\sum_{k_1>k_2}\int_0^{t_0}\int_0^{t_1}
c_{k_1}(t_1)Mb_{k_2}(t_1) Mg(t_{3})dt_1 dt_{3}\\
&-&\sum_{k_1>k_2}\int_0^{t_0}\int_0^{t_1}
c_{k_1}(t_1)Mb_{k_2}(t_2) Mg(t_{2})dt_1 dt_{2}
\eeaa
Hence, using first Lemma  \ref{le:main} followed by Lemma \ref{Le:2.3},
\beaa
\|J_c(t_0)\|_{L^1}&\les&
\sum_{k_1>k_2}\int_0^{t_0}\int_0^{t_1}
\|c_{k_1}(t_1)Mb_{k_2}(t_1) Mg(t_{3})\|_{L^1}dt_1 dt_{3}\\
&+&\sum_{k_1>k_2}\int_0^{t_0}\int_0^{t_1}
\|c_{k_1}(t_1)Mb_{k_2}(t_2) Mg(t_{2})\|_{L^1}dt_1 dt_{2}\\
&\les&\sum_{k_1>k_2}\int_0^{t_0}\int_0^{t_1}
\|c_{k_1}(t_1)\|_{H^1}\|b_{k_2}(t_1)\|_{H^1} N(g)(t_{3})dt_1
dt_{3}\\
&+&\sum_{k_1>k_2}\int_0^{t_0}\int_0^{t_1}
\|c_{k_1}(t_1)\|_{H^1}\|b_{k_2}(t_2)\|_{H^1} N(g)(t_{2})dt_1
dt_{2}\\
&\les&\sum_{k_1>k_2}\sup_{t\in
[0,1]}\|b_{k_2}(t)\|_{H^1}\int_0^{t_0}\int_0^{t_1}
\|c_{k_1}(t_1)\|_{H^1} N(g)(t_{2})dt_1dt_{2}\\
&\les&N(g)
\sum_{k_1>k_2\ge 0}2^{-k_2/2}\|b_{k_2}\|_{2}\|c_{k_1}\|_{L^1H^1}\les
N(g)\|b\|_2\sum_{k_1}\|c_{k_1}\|_{L^1H^1}\\
&\les&N(g)\|b\|_2\|c\|_3
\eeaa

\subsection{General case} Treatment of the general case will follow
the scheme laid down for the third iterate $u^{(3)}$. Additional challenge however
is presented in controlling constants in the estimates, which may  
grow uncontrollably with respect to  the order  of the iterates. Recalling
\eqref{eq:iterates1} we write,
\beaa
&&u^{(n+1)}(t)=\int_0^{t} g(t_1)dt_1 +
\int_0^{t} a(t_1)dt_1\int_0^{t_1}Mg(t_2) dt_2+\ldots \\
&+&\int_0^{t}\int_0^{t_1}\ldots\int_0^{t_{n}}a(t_1)M a(t_2)
M\ldots a(t_n)Mg(t_{n+1})dt_1 dt_2\ldots dt_{n+1}
\eeaa
To simplify notations introduce the simplex $\De_n(t)$ defined by,
\beaa
t\ge t_1\ge t_2\ldots\ge t_n\ge t_{n+1}\ge 0
\eeaa
and write,
\bea
u^{(n+1)}(t)&=&u^{(n)}(t)+J_n(t)
\eea
 where,
\beaa
J_n(t)&=&\int_{\De_n(t_0)} 
a(t_1)M a(t_2)  M\ldots a(t_m)M g(t_{n+1})\\ &:=&
\int\ldots\int_{\De_n(t_0)} dt_1\ldots
dt_{n+1}\,\,
a(t_1)M a(t_2)  M\ldots a(t_m)M g(t_{n+1})\nn
\eeaa
To prove \eqref{eq:Thm1} it will suffice to show that
\be{eq:n}
\sup_{t\in [0,1]}\|J_n(t)\|_{L^1(\RRR^d)}\les C^n \De^n N(g)
\end{equation}

 We decompose each  
$a(t_i)$  in the expression for $J_n$  into its Littlewood-Paley components according to,
\beaa
a(t_i)=\sum_{k\in \ZZZ_+\cup\{0\}} P_k a(t_i)=a_0(t_i)+\sum_{k_i\in \ZZZ_+}a_{k_i}(t_i)
\eeaa
Thus, writing $\k=(k_1,\ldots k_n)\in (\ZZZ_+\cup\{0\})^n$
\bea
J_n(t)=J(t)&=&\sum_{\k\in (\ZZZ_+\cup\{0\})^n} \int_{\De_n(t)}
a(t_1)_{k_1}M\ldots a_{k_n}(t_n)M g(t_{n+1})\label{eq:Jn}
\eea
For each $1\le j\le n$ we define,
 \bea
[k_j]=\{(k_1,k_2,\ldots k_n)\in (\ZZZ_+\cup\{0\})^n\,\,|\,\,k_i\le k_j\quad 
\forall i\}
\eea
to be the set on n-tuples $(k_1,...,k_n)$ with the property that for each $i=1,..,n$ 
$k_i\le k_j$. In what follows we will tacitly assume that all indices $k_i$ take values in 
the set of non-negative integers and will not write this constraint explicitly.
Let,
\bea
J_n^j(t)=J^j(t)&=&\sum_{\k\in [k_j]} \int_{\De_n(t)}
a_{k_1}(t_1) M\ldots a_{k_n}(t_n)M g(t_{n+1})\label{eq:Jn}
\eea
Clearly,
\beaa
\|J_n(t)\|_{L^1(\RRR^d)}\les
 \sum_{j=1}^n\|J^j_n(t)\|_{L^1(\RRR^d)}
\eeaa
We now fix $j$ and decompose in view of \eqref{eq:decomp},
 \bea
a_{k_j}(t_j)=\pr_{t} b_{k_j}(t_j)+c_{k_j}(t_j)
\eea 
Thus,
\bea
J^j(t)&=&J^j_b(t)+J^j_c(t)=\sum_{\k\in [k_j]} J^j_{b,\k}(t)+
\sum_{\k\in [k_j]}J^j_{c,\k}(t)\\
 J_{b,\k}^j(t)&=&
 \int_{\De_n(t)}a_{k_1}(t_1) M\ldots\pr_{t}b_{k_j}(t_j)M\ldots
a_{k_n}(t_n)M g(t_{n+1})dt_1\ldots dt_{n+1}
\nn\\
 J_{c,\k}^j(t)&=&
 \int_{\De_n(t)}a_{k_1}(t_1)M\ldots c_{k_j}(t_j)M\ldots
a_{k_n}(t_n)M g(t_{n+1})dt_1\ldots dt_{n+1}\nn
\eea
with the summation convention,
 \beaa
\sum_{\k\in [k_j]}=\sum_{k_j\in\ZZZ}\,\,\, \sum_{\k'\le
k_j},\qquad\quad  \k'=(k_1,\ldots \widehat{k_j}\ldots k_n).
\eeaa  
We  first estimate\footnote{For simplicity, since $j$ is kept fix
 we drop the $j$ upper index below}
$J_b=J_b^j$.   Integrating by parts,
\beaa
 J_{b,\k}(t)&=&
 \int_{\De_{n-1}(t)}\ldots a_{k_{j-1}}(t_{j-1})M b_{k_j}(t_{j-1})M
a_{k_{j+1}}(t_{j+1}) \ldots
M g(t_{n+1})dt_1\ldots\widehat{dt_j}\ldots dt_{n+1}\\
&-&\int_{\De_{n-1}(t)}\ldots a_{k_{j-1}}(t_{j-1})M b_{k_j}(t_{j+1})M a_{k_{j+1}}(t_{j+1})
\ldots
M g(t_{n+1})dt_1\ldots\widehat{dt_{j}}\ldots dt_{n+1}\\
&=&J^{-}_{b,\k}(t)+J^{+}_{b,\k}(t)
\eeaa
Now,  with the help of Lemma  \ref{le:main}, we proceed as 
in the previous subsection,
\beaa
\|J^{-}_{b,\k}(t)\|_{L^1}&\les& C^n\sup_{t}\|b_{k_j}(t)\|_{H^1}
\int_{\De_{n-1}(t)}A_\k(t_1,\ldots
\widehat{t_{j}}\ldots t_n )  N(g)(t_{n+1})dt_1\ldots\widehat{dt_j}\ldots
dt_{n+1}
\eeaa
where,
\beaa
A_{\k,j}(\ldots
\widehat{t_{j}}\ldots  )&=&\|a_{k_1}(t_1)\|_{H^1}\ldots 
\widehat{\|a_{k_j}(t_j)\|_{H^1}}\ldots \|a_{k_n}(t_n)\|_{H^1}
\eeaa
Henceforth, with the help of Lemma \ref{Le:2.3},
\beaa
\|J^{-}_{b,\k}(t)\|_{L^1}&\les&  C^nN(g)2^{-k_j/2}\|b_{k_j}\|_2\,\, 
|\De_{n-2}(t)|^{1/2}\big(\int_{\De_{n-2}(t)}A_\k(\ldots
\widehat{t_{j}}\ldots  )^2dt_1\ldots\widehat{dt_j}\ldots
dt_{n}\big)^{1/2}
\eeaa
where $|\De_{n-2}(t)|$ is the volume of the $n-2$ dimensional simplex\footnote{In our notations 
it corresponds to an actual $(n-1)$-dimensional simplex.}.
Consequently,
\beaa
\|J^{-}_{b,\k}(t)\|_{L^1}&\les&C^n((n-1)!)^{-1/2}
N(g)2^{-k_j/2}\|b_{k_j}\|_2
\|a_{k_1}\|_1\ldots \widehat{\|a_{k_j}\|_1}\ldots\|a_{k_n}\|_1
\eeaa
and, by triangle inequality and  then Cauchy-Schwartz,
\beaa
\|\sum_{\k\in [k_j]}J^{-}_{b,\k}(t)\|_{L^1}&\les&C^n((n-1)!)^{-1/2}
N(g)\sum_{\k\in [k_j]}2^{-k_j/2}\|b_{k_j}\|_2
\|a_{k_1}\|_1\ldots \widehat{\|a_{k_j}\|_1}\ldots\|a_{k_n}\|_1\\
&\les&C^n ((n-1)!)^{-1/2}
N(g)(\sum_{\k\in [k_j]}2^{-k_j})^{1/2}\big(\sum_{\k\in [k_j]}\|b_{k_j}\|_2^2
\|a_{k_1}\|_1^2\ldots\|a_{k_n}\|_1^2\big)^{1/2}\\
&\les&C^n(\frac{n!}{(n-1)!})^{1/2}N(g)\|b\|_2
\|a\|_1^{n-1}\\
&\les& n^{\frac 12} C^nN(g)\|b\|_2
\|a\|_1^{n-1}
\eeaa
Proceeding  exactly in the same way we derive,
\beaa
\|\sum_{\k\in
[k_j]}J^{+}_{b,\k}(t)\|_{L^1}&\les&nC^nN(g)\|b\|_2
\|a\|_1^{n-1}
\eeaa
Therefore, recalling that $J_b(t)=\sum_{\k\in [k_j]} J_{b,\k}(t)$,
\bea
\|J_b^j(t)\|_{L^1(\RRR^d)}&\les&nC^nN(g)\|b\|_2
\|a\|_1^{n-1}
\eea

To estimate $J_c^j(t)=\sum_{\k\in[ k_j]} J_{c,\k}(t)$ we have to do
a further decomposition. We define,
\bea
[ k_j,k_l]=\{(k_1,k_2,\ldots k_n)\in(\ZZZ_+\cup\{0\})^n\,\,|\,\,k_i\le
k_l\le k_j\quad 
\forall i\neq l,j\}
\eea
For fixed  $j$  we have precisely $n-1$ such regions
 covering $[k_j]$. Fix $l\neq j$  and consider,
\bea
J_c^{jl}(t)=\sum_{\k\in[ k_j,k_l]} J_{c,\k}^{jl}(t)
\eea
Clearly,
\bea
\|J_c^j(t)\|_{L^1(\RRR^d)}&\les&\sum_{l\neq
j}\|J_{c,\k}^{jl}(t)\|_{L^1(\RRR^d)}
\eea
In view of \eqref{eq:decomp} we
decompose,
 \bea
a_{k_l}(t_l)=\pr_{t} b_{k_l}(t_l)+c_{k_l}(t_l)
\eea
Thus, dropping the upper indices $j,l$,
\bea
J_c(t)&=&J_{cb}(t)+J_{cc}(t)=\sum_{\k \in [k_j,k_l]} J_{cb,\k}(t)+
\sum_{\k\in [k_j,k_l]}J_{cc,\k}(t)\\
 J_{cb,\k}(t)&=&
 \int_{\De_n(t)}a_{k_1}(t_1)M\ldots
c_{k_j}(t_j)M\ldots\pr_{t}b_{k_l}(t_l)M\ldots a_{k_n}(t_n)M
g(t_{n+1})dt_1\ldots dt_{n+1}
\nn\\
 J_{cc,\k}(t)&=&
 \int_{\De_n(t)}a(t_1)_{k_1}M\ldots
c_{k_j}(t_j)M\ldots c_{k_l}(t_l)\ldots a_{k_n}(t_n)M
g(t_{n+1})dt_1\ldots dt_{n+1}\nn
\eea 
  Integrating by parts, and droping the operators $M$ for a moment,
\beaa
 J_{cb,\k}(t)&=&
 \int_{\De_{n-1}(t)}\ldots c_{k_j}(t_j)\ldots
a_{k_{l-1}}(t_{l-1})b_{k_l}(t_{l-1}) a_{k_{l+1}}(t_{l+1}) \ldots
g(t_{n+1})dt_1\ldots\widehat{dt_l}\ldots dt_{n+1}\\
&-&\int_{\De_{n-1}(t)}\ldots c_{k_j}(t_j)\ldots a_{k_{l-1}}(t_{l-1}) b_{k_l}(t_{l+1})
a_{k_{l+1}}(t_{l+1})  a_{k_{l+2}}(t_{l+2})\ldots
 g(t_{n+1})dt_1\ldots\widehat{dt_{l}}\ldots dt_{n+1}\\
&=&J^{-}_{cb,\k}(t)+J^{+}_{cb,\k}(t)
\eeaa
Using Lemma  \ref{le:main} as before,
\beaa
\|J^{\pm}_{cb,\k}(t)\|_{L^1}&\les& C^n\sup_{t}\|b_{k_l}(t)\|_{H^1}
\int_{\De_{n-1}(t)}B_{\k}(t_1,\ldots
\widehat{t_{l}}\ldots t_n )  N(g)(t_{n+1})dt_1\ldots\widehat{dt_l}\ldots
dt_{n+1}
\eeaa
where,
\beaa
B_{\k}(\ldots
\widehat{t_{l}}\ldots  )&=&\|a_{k_1}(t_1)\|_{H^1}\ldots
\|c_{k_j}(t_j)\|_{H^1}\ldots
\widehat{\|a_{k_l}(t_l)\|_{H^1}}\ldots \|a_{k_n}(t_n)\|_{H^1}
\eeaa
Therefore, exactly as before with the help of Lemma \ref{Le:2.3},
\beaa
\|J^{\pm}_{cb,\k}(t)\|_{L^1}&\les& C^n N(g)2^{-k_l/2}\|b_{k_l}\|_2\,\,
P_{\k,n-2}(t)\\
P_{\k,n-2}(t)&=&\int_{\De_{n-2}(t)}B_\k(\ldots
\widehat{t_{l}}\ldots  )dt_1\ldots\widehat{dt_l}\ldots
dt_{n}
\eeaa
Observe that,
\beaa
P_{\k,n-2}(t)
&\le&\int_{\De_{n-2}(t)}
\|a_{k_1}(t_1)\|_{H^1}\ldots
\|c_{k_j}(t_j)\|_{H^1}\ldots\widehat{\|a_{k_l}(t_l)\|_{H^1}}\dots
 \|a_{k_n}(t_{n})\|_{H^1}dt_1\ldots\widehat{dt_l}\ldots dt_{n}
\eeaa

Thus,
\beaa
&&\|\sum_{\k\in[k_j,k_l]}J^{\pm}_{cb,\k}(t)\|_{L^1}\les C^n
((n-2)!)^{-1/2}N(g) Q
\eeaa
with,
\beaa
Q&=&\sum_{k_l\le
k_j}2^{-k_l/2}\|b_{k_l}\|_2\|c_{k_j}\|_{3}\sum_{\k''\le
k_l}\|a_{k_1}\|_1\ldots 
\widehat{\|a_{k_j}\|_1}\ldots\widehat{\|a_{k_l}\|_1}\ldots
\|a_{k_n}\|_1
\eeaa
with $k''=(k_1,k_2\ldots, \widehat{k_j},\ldots,\widehat{k_l}\ldots
k_n)$. Therefore, by Cauchy-Schwartz,
\beaa
Q&\les&\sum_{k_l\le
k_j}2^{-k_l/2}k_l^{(n-2)/2} \|b_{k_l}\|_2\|c_{k_j}\|_{3}
\big(\sum_{\k''\le k_l}\|a_{k_1}\|_1^2\ldots 
\|a_{k_n}\|_1^2\big)^{1/2}\\
&&\les\|a\|_{1}^{n-2}\sum_{k_j\in\ZZZ}\|c_{k_j}\|_{3}
\sum_{k_l\le
k_j}2^{-k_l/2}k_l^{(n-2)/2} \|b_{k_l}\|_2\\
&&\les\|a\|_{1}^{n-2}\|b\|_2\sum_{k_j\in\ZZZ}
\|c_{k_j}\|_{3}\big(\sum_{k_l=0}^{k_j}
2^{-k_l}k_l^{(n-2)}\big)^{1/2}\\
&&\les ((n-1)!)^{1/2}\|a\|_{1}^{n-2}\|b\|_2\|c\|_3
\eeaa
Consequently,
\beaa
\|\sum_{\k\le k_l\le k_j}J^{\pm}_{cb,\k}(t)\|_{L^1}&\les& C^n
\big(\frac{(n-1)!}{(n-2)!}\big)^{1/2}N(g) \|a\|_{1}^{n-2}
\|b\|_2\|c\|_3\\
&\les&n^{\frac 12} C^n N(g) \|a\|_{1}^{n-2}
\|b\|_2\|c\|_3
\eeaa
Therefore,
\bea
\sup_{t\in[0,1]}\|J_{cb}^{jl}(t)\|_{L^1(\RRR^d)}\les
n^{\frac 12} C^n N(g) \|a\|_{1}^{n-2}\|b\|_2\|c\|_3
\eea
To treat the term $J_{cc,\k}(t)$ we decompose once more. 
Continuing in the same manner  after $m$ steps we arrive
at the integral,
\bea
J_{c_1\ldots c_{m-1}}^{j_1j_2\ldots j_{m-1}}(t)&=&\sum_{
[k_{j_1}\ldots k_{j_{m-1}}]}
\int_{\De_n(t)}\ldots
\eea
with the integrand containing $c_1=c_{k_{j_1}},
c_2=c_{k_{{j_2}}}\ldots c_{m-1}=c_{k_{j_{m-1}}}$ and
\beaa
[k_{j_1},\ldots k_{j_{m-1}}]=\{(k_1\ldots
k_n)\in\ZZZ^n\,\,|\,\,k_i\le k_{j_m}\le\ldots\le  k_{j_1}\quad 
\forall i\neq j_1,j_2\ldots j_{m-1}\}
\eeaa
Clearly $[k_{j_1},\ldots k_{j_{m-1}}]$ can be covered by
precisely 
$n-m+1$ regions of the form $[k_{j_1},\ldots k_{j_m}]$. We have,
\bea
J_{c_1\ldots c_{m-1}}^{j_1j_2\ldots j_{m-1}}(t)&=&\sum_{j_m} 
J_{c_1\ldots c_{m-1}}^{j_1j_2\ldots j_{m}}(t),\qquad k_{j_m}\le
k_{j_{m-1}}\\ J_{c_1\ldots c_{m-1}}^{j_1j_2\ldots j_{m}}(t)&=&\sum_{
[k_{j_1}\ldots k_{j_{m}}]}
\int_{\De_n(t)}\ldots
\eea
In view of \eqref{eq:decomp} we
decompose,
 \bea
a_{k_{j_m}}(t_{j_m})=\pr_{t} b_{k_{j_m}}(t_{j_m})+c_{k_{j_m}}(t_{j_m})
\eea
and, respectively,
\beaa
J_{c_1\ldots c_{m-1}}^{j_1j_2\ldots j_{m}}(t)=\sum_{\k \in[
k_{j_1}\ldots k_{j_m}]} J_{c_1\ldots c_{m-1}b_m,\k}^{j_1j_2\ldots
j_{m}}(t)+
\sum_{\k\in[ k_{j_1}\ldots
k_{j_m}]} J_{c_1\ldots c_{m},\k}^{j_1j_2\ldots j_{m}}(t)
\eeaa
where $b_m=b_{k_{j_m}}$, $c_m=c_{k_{j_m}}$
Proceeding exactly as before, integrating by parts and using Lemma  \ref{le:main},
we write,
\beaa
\|J_{c_1\ldots c_{m-1}b_m,\k}^{j_1j_2\ldots
j_{m}}(t)\|_{L^1}&\les& C^n\sup_{t}\|b_{k_{j_m}}(t)\|_{H^1}
\int_{\De_{n-1}(t)}B_\k(t_1,\ldots
\widehat{t}_{j_m}\ldots t_n )  N(g)(t_{n+1})
\eeaa
where,
\beaa
B_{\k}(\ldots
\widehat{t}_{j_m}\ldots  )&=&\|c_{k_{j_1}}(t_{j_1})\|_{H^1}\ldots
\|c_{k_{j_{m-1}}}(t_{j_{m-1}})\|_{H^1}\\
&\cdot&
\|a_{k_{j_{m+1}}}(t_{j_{m+1}})\|_{H^1}\ldots  \|a_{k_{j_n}}(t_{j_n})\|_{H^1}
\eeaa
Therefore,
\beaa
\|J_{c\ldots cb,\k}^{j_1j_2\ldots
j_{m}}(t)\|_{L^1}&\les& C^n N(g)2^{-k_{j_m}/2}\|b_{k_{j_m}}\|_2\,\,
P_{\k,n-2}(t)\\
P_{\k,n-2}(t)&=&\int_{\De_{n-2}(t)}B_{\k}(\ldots
\widehat{t}_{j_m}\ldots  )
\eeaa
where $k_{j_{m+1}},\ldots k_{{j_n}}$ are the  labels for all
other frequencies different  from  $k_{j_1},\ldots k_{j_{m-1}}$.

To estimate $P_{\k,n-2}(t)$ we make use of
the following  obvious lemma.
\begin{lemma}\label{le:combin} Let $f_1,f_2,\ldots f_{n}$  be an
ordered sequence  of
$n$
 positive, integrable, functions defined on the interval
 $[0,1]\subset\RRR$ among
which
$m$, say $f_{i_1}, i=1,\ldots m$ are in $L^1$ and
$n-m$, say $f_{j_1},\ldots f_{j_{n-m}}$ are in $L^2$. 
Then,
\beaa
\int_{\De_{n-2}(t)}f_1(t_1)\ldots f_{n}(t_{n})dt_1\ldots dt_{n
}&\les&
\big(\frac{1}{(n-m)!}\big)^{1/2}\|f_{i_1}\|_{L^1}\ldots 
\|f_{i_m}\|_{L^1}\\&\c&\|f_{j_1}\|_{L^1}\ldots 
\|f_{j_{n-m}}\|_{L^1}
\eeaa
\end{lemma}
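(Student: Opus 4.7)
The plan is to isolate the integration over the $m$ variables attached to $L^1$ functions from the integration over the $n-m$ variables attached to $L^2$ functions. The $L^1$ integrations produce no factorial gain but only $\|f_{i_\ell}\|_{L^1}$ factors, while Cauchy--Schwarz applied to the remaining ordered simplex in the $L^2$ variables supplies both the $1/\sqrt{(n-m)!}$ gain and the $L^2$ norms (I read the second group of norms on the right-hand side of the statement as $L^2$ norms, since the written $L^1$ appears to be a typo).

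First, for each fixed configuration of the $L^2$ variables, the $L^1$ variables $t_{i_1},\ldots,t_{i_m}$ lie in a product of sub-intervals of $[0,t]\subset[0,1]$ determined by the interleaving order inherited from $\De_{n-2}(t)$. I would integrate these out one by one by Fubini, bounding $\int_I f_{i_\ell}(s)\,ds\le\|f_{i_\ell}\|_{L^1}$ for each sub-interval $I\subset[0,1]$. Because this bound is independent of the positions of the other variables, the $L^1$ integrations factor cleanly, and by transitivity of the chain $t\ge t_1\ge\ldots\ge t_n\ge 0$ the surviving $L^2$ variables still satisfy the induced order. This produces
\begin{align*}
\int_{\De_{n-2}(t)}\prod_k f_k(t_k)\,dt
\;\les\;
\Big(\prod_{\ell=1}^m\|f_{i_\ell}\|_{L^1}\Big)
\int_{\De^\star}\prod_k f_{j_k}(t_{j_k})\,dt_{j_1}\cdots dt_{j_{n-m}},
\end{align*}
where $\De^\star\subset[0,1]^{n-m}$ is the ordered simplex in the $n-m$ surviving $L^2$ variables, of volume at most $1/(n-m)!$.

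Second, to estimate the residual simplex integral I would apply Cauchy--Schwarz in the $L^2$ variables and then enlarge the domain of integration in the squared product to the full cube, exploiting the tensor-product structure of $\prod_k f_{j_k}(t_{j_k})^2$:
\begin{align*}
\int_{\De^\star}\prod_k f_{j_k}(t_{j_k})\,dt_J
\;\le\;
|\De^\star|^{1/2}
\Big(\int_{[0,1]^{n-m}}\prod_k f_{j_k}(t_{j_k})^2\,dt_J\Big)^{1/2}
\;\le\;
\frac{1}{\sqrt{(n-m)!}}\prod_k \|f_{j_k}\|_{L^2}.
\end{align*}
Combining the two steps then yields the stated bound.

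I expect no serious obstacle. The only point that needs care is verifying that the iterated $L^1$ integrations really do leave behind a genuine $(n-m)$-dimensional ordered simplex in the $L^2$ variables with no residual coupling to the now-integrated $L^1$ coordinates; this is immediate from transitivity of the chain defining $\De_{n-2}(t)$. It is worth noting that the two extreme cases serve as consistency checks: when $m=n$ the estimate reduces to the trivial bound $\prod\|f_{i_\ell}\|_{L^1}$ obtained by dropping the order constraint, while when $m=0$ it reduces to the standard Cauchy--Schwarz bound on a full $n$-dimensional simplex with gain $1/\sqrt{n!}$.
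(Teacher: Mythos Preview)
Your proof is correct, and the paper does not actually supply a proof of this lemma --- it is introduced with the phrase ``the following obvious lemma'' and left unproved. Your two-step argument (Fubini to peel off the $L^1$ factors, then Cauchy--Schwarz on the residual $(n-m)$-dimensional simplex to extract the $1/\sqrt{(n-m)!}$ volume factor) is exactly the reasoning the paper uses implicitly in the surrounding text, and your reading of the second group of norms as $L^2$ rather than $L^1$ is the intended one.
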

According to Lemma \ref{le:combin} we have,
\beaa
P_{\k,n-2}(t)&\les&\big(\frac{1}{(n-m-1)!}\big)^{1/2}
\|c_{k_{j_1}}\|_{L^1
H^1}\ldots 
\|c_{k_{j_{m-1}}}\|_{L^1 H^1}\c\|a_{k_{j_{m+1}}}\|_{1}\ldots 
\|a_{k_{j_{n}}}\|_{1}
\eeaa
Observe that,
\beaa
\sum_{k''\le k_{j_m}}\|a_{k_{j_{m+1}}}\|_{1}\ldots 
\|a_{k_{j_{n}}}\|_{1}&\les& \big(k_{j_m}\big)^{(n-1-m)/2} (\sum_{k''\le
k_{j_m}}\|a_{k_{j_{m+1}}}\|_{1}^2\ldots 
\|a_{k_{j_{n}}}\|_{1}^2)^{1/2}\\
&\les&\big(k_{j_m}\big)^{(n-1-m)/2} \|a\|_{1}^{m-n}
\eeaa
where $k''=(k_{j_{m+1}},\ldots k_{{j_n}})$.
 Observe also that,
\bea
\sum_{k_{j_1}\le k_{j_2}\ldots \le k_{j_{m-1}}}
\|c_{k_{j_1}}\|_{L^1
H^1}\ldots 
\|c_{k_{j_{m-1}}}\|_{L^1 H^1}\les
\frac{1}{(m-1)!}\|c\|_{3}^{m-1}
\eea
Indeed this follows by symmetry in view of the fact that,
\beaa
\sum_{k_{j_1},\ldots, k_{j_m}}\|c_{k_{j_1}}\|_{L^1
H^1}\ldots 
\|c_{k_{j_{m-1}}}\|_{L^1 H^1}\les
\|c\|_{3}^{m-1}
\eeaa
Finally, by Cauchy-Schwartz,
\beaa
\sum_{
k_{j_m}\in\ZZZ}2^{-k_{j_m}/2}\big(k_{j_m}\big)^{(n-1-m)/2}
\|b_{k_{j_m}}\|_2\les ((n-m)!)^{1/2}\|b\|_2
\eeaa
Hence,
\beaa
\sum_{[ k_{j_1}\ldots
k_{j_m}]}\| J_{c\ldots cb,\k}^{j_1j_2\ldots j_{m}}(t)\|_{L^1}
&\les&  C^n \frac{1}{(m-1)!}
\big(\frac{(n-m)!}{(n-m-1)!}\big)^{1/2}N(g)\|b\|_{2}\|a\|_1^{n-m}
\|c\|^{m-1}_{3}
\eeaa
In other words,
\bea
\sum_{[ k_{j_1}\ldots
k_{j_m}]}\| J_{c\ldots cb,\k}^{j_1j_2\ldots j_{m}}(t)\|_{L^1}
&\les&n^{\frac 12} C^n \frac{1}{(m-1)!}\De_0^n
\eea

We are ready to estimate $J_n(t)=J(t)$ in formula
\eqref{eq:Jn}. We have, 
\beaa
\|J(t)\|_{L^1)}\les
 \sum_{j_{1}=1}^n\|J^{j_1}(t)\|_{L^1}
\eeaa
and, 
\beaa
\|J^{j_1}(t)\|_{L^1}&\les&\|J^{j_1}_{b_1}(t)\|_{L^1}+
\|J^{j_1}_{c_1}(t)\|_{L^1}\\
&\les&n^{\frac 12} C^n\De_0^n+\|J^{j_1}_{c_1}(t)\|_{L^1}
\eeaa
Hence,
\beaa
\|J(t)\|_{L^1}&\les&n^{\frac 32}C^n\De_0^n+
 \sum_{j_{1}=1}^n\|J^{j_1}_{c_1}(t)\|_{L^1}
\eeaa
On the other hand, for each $j_1$,
\beaa
\|J^{j_1}_{c_1}(t)\|_{L^1}\les
\sum_{j_{2}\neq j_1}^n\|J^{j_1j_2}_{c_1}(t)\|_{L^1}
\eeaa
and,
\beaa
\|J^{j_1j_2}_{c_1}(t)\|_{L^1}&\les &
\|J^{j_1j_2}_{c_1b_2}(t)\|_{L^1}+
\|J^{j_1j_2}_{c_1c_2}(t)\|_{L^1}\\
&\les&n^{\frac 12} \frac {C^n\De_0^n}{1!}+\|J^{j_1j_2}_{c_1c_2}(t)\|_{L^1}
\eeaa
Therefore,
\beaa
\|J(t)\|_{L^1(\RRR^d)}&\les&n^{\frac 12}  n C^n\De_0^n+n^{\frac 12} \frac {n(n-1)}{1!} 
C^n\De_0^n
+\sum_{j_1\neq j_2}\|J^{j_1j_2}_{c_1c_2}(t)\|_{L^1}
\eeaa
Continuing in this way we derive,
\beaa
\|J_n(t)\|_{L^1}&\les&N(g) n^{\frac 32} \De_0^nC^n\big(1+\frac {(n-1)}{1!}+
\frac{(n-1)(n-2)}{2!}\ldots+\frac{(n-1)\ldots (n-m)}{(m-1)!}+ \ldots
1\big )\\
&\les&n^{\frac 32}\De_0^nC^n(1+1)^{n-1} N(g)
\les n^{\frac 32} \De_0^n(2C)^n N(g),
\eeaa
as claimed in \eqref{eq:n}.
 
\end{document}